\documentclass[11pt,a4paper]{article}

\usepackage{inputenc}
\usepackage{amsmath}
\usepackage{bm}
\usepackage{bbold}
\usepackage{amsthm}
\usepackage{enumerate}

\usepackage[pdfstartview={XYZ null null 1.00},pdfpagemode=UseNone,breaklinks]{hyperref}

\usepackage{tikz}\tikzset{x=1cm,y=1cm,z=1cm}
\usetikzlibrary{perspective}

\usepackage{pgfplots}\pgfplotsset{compat=1.16}

\title{Algebraic solution to box-constrained\\ bi-criteria problem of rating alternatives through\\ pairwise comparisons\thanks{Kybernetika 58(5), 665-690 (2022). https://www.kybernetika.cz/content/2022/5/665}}

\author{N. Krivulin\thanks{Faculty of Mathematics and Mechanics, Saint Petersburg State University, 28 Universitetsky Ave., St.~Petersburg, 198504, Russia, nkk@math.spbu.ru.}}

\date{}

\newtheorem{theorem}{Theorem}
\newtheorem{lemma}[theorem]{lemma}

\theoremstyle{definition}
\newtheorem{example}{Example}



\begin{document}

\maketitle

\begin{abstract}
We consider a decision-making problem to evaluate absolute ratings of alternatives that are compared in pairs according to two criteria, subject to box constraints on the ratings. The problem is formulated as the log-Chebyshev approximation of two pairwise comparison matrices by a common consistent matrix (a symmetrically reciprocal matrix of unit rank), to minimize the approximation errors for both matrices simultaneously. We rearrange the approximation problem as a constrained bi-objective optimization problem of finding a vector that determines the approximating consistent matrix, and then represent the problem in terms of tropical algebra. We apply methods and results of tropical optimization to derive an analytical solution of the constrained problem. The solution consists in introducing two new variables that describe the values of the objective functions and allow reducing the problem to the solution of a system of parameterized inequalities constructed for the unknown vector, where the new variables play the role of parameters. We exploit the existence condition for solutions of the system to derive those values of the parameters that belong to the Pareto front inherent to the problem. Then, we solve the system for the unknown vector and take all solutions that correspond to the Pareto front, as a complete solution of the bi-objective problem. We apply the result obtained to the bi-criteria decision problem under consideration and present illustrative examples.
\\

\textbf{Keywords:} idempotent semifield, tropical optimization, constrained bi-criteria decision problem, Pareto-optimal solution, box constraints, pairwise comparisons.
\\

\textbf{MSC (2020):} 90C24, 15A80, 90B50, 90C29, 90C47
\end{abstract}

\section{Introduction}

The problem of evaluating alternatives based on their pairwise comparisons under multiple criteria (the multicriteria pairwise comparison problem) is one of the most common and demanded decision-making tasks in practice \cite{Gavalec2015Decision,Ramesh2013Multiple,Ramik2020Pairwise,Saaty1990Analytic}. 
Although there are a number of methods proposed to solve the problem, they may not be quite effective due to some drawbacks, which are widely discussed in the literature \cite{Barzilai1997Deriving,Belton1983Shortcoming}. As a result, the development of new solution techniques that can complement and supplement existing methods continues to be of theoretical interest and practical value. Of particular importance are solutions to new problems under additional assumptions, which are difficult to solve by existing methods, including constrained pairwise comparison problems.

\subsection{One criterion pairwise comparison problems}

First, consider an unconstrained pairwise comparison problem with a set of alternatives and one criterion. The alternatives are compared in pairs, which results in a matrix $\bm{A}=(a_{ij})$ where $a_{ij}$ shows the relative preference (priority, weight) of alternative $i$ over $j$. Given a matrix $\bm{A}$ of pairwise comparisons, the problem is to evaluate a vector $\bm{x}=(x_{i})$ of absolute ratings (scores, weights) of the alternatives under examination.

The entries of a pairwise comparison matrix $\bm{A}$ must satisfy the condition $a_{ij}=1/a_{ji}$ for all $i$ and $j$, and hence the matrix $\bm{A}$ is symmetrically reciprocal. According to this condition (which is assumed to hold in practice), if alternative $i$ is $a_{ij}$ times preferred to $j$, than alternative $j$ has to be $1/a_{ij}$ times preferred to $i$.

A symmetrically reciprocal matrix $\bm{A}$ is consistent if its entries satisfy the transitivity condition $a_{ij}=a_{ik}a_{kj}$ for all $i$, $j$ and $k$. This condition (which is commonly violated in real-world problems) suggests that if alternative $i$ is $a_{ik}$ times preferred to $k$ whereas alternative $k$ is $a_{kj}$ times preferred to $j$, then alternative $i$ should be $a_{ik}a_{kj}$ times preferred to $j$. If the pairwise comparison matrix $\bm{A}$ is consistent, then there exists a positive vector $\bm{x}=(x_{i})$ defined up to a positive factor, that determines the matrix $\bm{A}$ by the condition $a_{ij}=x_{i}/x_{j}$ and thus directly solves the pairwise comparison problem. As a solution vector $\bm{x}$ in this case, one can take any column of the matrix $\bm{A}$.

Since the pairwise comparison matrices encountered in practice are usually not consistent, the solution is to find a consistent matrix that is as much as possible close to (approximates) the given inconsistent matrix. Then, the positive vector corresponding to this consistent matrix is taken as an approximate solution of the problem.

The methods available to solve the one criterion pairwise comparison problem include both heuristic procedures and approximation techniques \cite{Choo2004Common,Saaty1984Comparison}. The heuristic procedures mainly offer various schemes of aggregating columns in the pairwise comparison matrix $\bm{A}$, such as the weighted column sum methods that derive the solution vector by calculating a weighted sum of columns in the matrix $\bm{A}$. In the case when the column weights are set proportional to the solution vector, the technique is known as the principal eigenvector method \cite{Saaty1990Analytic,Saaty2013Onthemeasurement}. This method takes the principal (Perron) eigenvector of $\bm{A}$ as a vector $\bm{x}$ of absolute ratings, and finds wide application in practice.

The approximation techniques are based on minimizing an error function that measures the distance between matrices when a pairwise comparison matrix $\bm{A}$ is approximated by a consistent matrix \cite{Saaty1984Comparison}. The most common approach involves approximation with the Euclidean distance in logarithmic scale (the log-Euclidean approximation), which leads to the geometric mean method \cite{Barzilai1997Deriving,Crawford1985Note}. This method yields an analytical solution where the elements of the vector $\bm{x}$ of ratings are calculated as the geometric means of the corresponding rows in the matrix $\bm{A}$. Both methods of principal eigenvector and geometric mean provide a unique (up to a positive factor) vector $\bm{x}$. They, however, do not allow to account for in a simple way additional constraints, such as box constraints on $\bm{x}$, which can arise as an essential condition in real-world problems.

Another approximation technique \cite{Elsner2004Maxalgebra,Elsner2010Maxalgebra} to solve the pairwise comparison problem, employs minimization of Chebyshev distance between matrices, taken in logarithmic scale (the log-Chebyshev approximation). In contrast to the principal eigenvector and geometric mean methods, the solution provided by the log-Chebyshev approximation can be nonunique. This may complicate the choice of one optimal solution in practice, but at the same time, can expand the possibilities of making decisions by taking into account additional conditions. Complete analytical solutions that describe in compact algebraic form all solutions for both unconstrained and constrained problems of pairwise comparisons are derived based on methods and results of tropical algebra in \cite{Krivulin2015Rating,Krivulin2016Using}.

The tropical (idempotent) mathematics, which is concerned with the theory and application of algebraic systems with idempotent operations \cite{Golan2003Semirings,Gondran2008Graphs,Heidergott2006Maxplus,Kolokoltsov1997Idempotent,Maclagan2015Introduction}, finds increasing use in various areas including operations research and management science. A binary operation is idempotent if performing the operation on two operands of the same value gives this value as the result (one can consider the operations $\max$ and $\min$ as the most relevant examples). Application of methods and results of tropical mathematics often allows one to find new effective solutions to old and novel problems in location analysis, project scheduling, decision making and other areas (see e.\,g., \cite{Elsner2004Maxalgebra,Elsner2010Maxalgebra,Goto2022Polyad,Gursoy2013Analytic}). In many cases, tropical algebra can offer direct complete analytical solutions for problems that has only numerical algorithmic solutions available in the literature.

\subsection{Multicriteria pairwise comparison problems}

The problem of evaluating alternatives from pairwise comparisons under multiple criteria corresponds to a more realistic scenario in the decision making process, while becomes more difficult to solve. The most commonly used solution is based on the heuristic technique known as the analytical hierarchy process method \cite{Saaty1977Scaling,Saaty1990Analytic,Saaty2013Onthemeasurement}. The method assumes that for given matrices $\bm{A}_{1},\ldots,\bm{A}_{k}$ of pairwise comparisons of alternatives according to $k$ criteria, and a matrix $\bm{C}$ of pairwise comparisons of criteria, one needs to find a common vector $\bm{x}$, which shows the absolute ratings of alternatives. The method calculates normalized (with respect to the sum of components) principal eigenvectors of the pairwise comparison matrices for alternatives and criteria. The solution vector $\bm{x}$ is then obtained as the sum of the eigenvectors of the matrices $\bm{A}_{1},\ldots,\bm{A}_{k}$, taken with weights equal to corresponding components of the eigenvector of the matrix $\bm{C}$.

Another widely adopted solution technique is the weighted geometric mean method \cite{Barzilai1997Deriving,Crawford1985Note}. The method involves calculating the vectors of geometric means for each matrix of pairwise comparisons involved in the problem. Furthermore, the components of the solution vector $\bm{x}$ are obtained as weighted products of corresponding components in the geometric mean vectors for matrices $\bm{A}_{1},\ldots,\bm{A}_{k}$, where the weights are given by the components of the normalized geometric mean vector for the matrix $\bm{C}$.

The most general approach to solve the multicriteria pairwise comparison problem under consideration is to handle this problem within the framework of multiobjective optimization \cite{Benson2009Multiobjective,Ehrgott2005Multicriteria,Luc2008Pareto,Pappalardo2008Multiobjective}. The solution of multiobjective optimization problems, where competing and conflicting objectives have to be satisfied, is usually given in the form of a set of Pareto-optimal (non-dominated) solutions, which are defined as solutions that cannot be improved in one objective without getting worse in another.

The derivation of all Pareto-optimal solutions in problems with many objectives is generally a rather difficult task. In this case, the solution is focused on finding one of these solutions, which is often obtained by reduction to an ordinary optimization problem with one objective by using scalarization techniques. The weighted geometric mean method presents an example of this approach, where the multiobjective problem is reduced to minimizing a weighted sum of the errors of simultaneous log-Euclidean approximation of the pairwise comparison matrices for criteria. As another example, one can consider the solution in terms of tropical algebra in \cite{Krivulin2016Using,Krivulin2019Tropical}, which is based on the minimization of a weighted maximum of errors in log-Chebyshev approximation.

If the number of objectives is small, say two or three, then a complete solution can sometimes be obtained through an analytical description of the Pareto front defined as the image of the set of Pareto-optimal solutions in the objective space. The description of the front is then used to derive all Pareto-optimal solutions in a parametric form. Specifically, this approach is applied in \cite{Krivulin2020Using} to derive all these solutions in a bi-criteria problem with two pairwise comparison matrices, which is formulated as simultaneous log-Chebyshev approximation of both matrices and then solved as a bi-objective optimization problem in the framework of tropical optimization. This result is extended in \cite{Krivulin2021Algebraic} to a bi-criteria problem where the solution vector of ratings of alternatives is subjected to relative constraints in the form of bounds on the ratios between the ratings.

Constrained multicriteria problems of rating alternatives from pairwise comparisons arise in different applications, where prior information about absolute or relative bounds on the ratings must be taken into account. Enabling constraints in the pairwise comparison problems reflects the increasing complexity of contemporary decision-making processes and may prove to be a very difficult task to attack with existing methods, including the analytical hierarchy process and weighted geometric mean methods.

In this paper, we present an analytical solution to a constrained bi-criteria pairwise comparison problem that accommodates box constraints on the vector $\bm{x}$ of ratings of alternatives. The problem is formulated as simultaneous log-Chebyshev approximation of two pairwise comparison matrices $\bm{A}$ and $\bm{B}$ by a common consistent matrix, to minimize approximation errors for the matrices as a vector objective function. We apply and further develop the tropical algebraic approach, which is implemented in \cite{Krivulin2020Using,Krivulin2021Algebraic} to solve both bi-objective pairwise comparison problems without constraints and with tropical linear inequality constraints.

We rearrange the approximation problem as a constrained bi-objective optimization problem of finding a vector that determines the approximating consistent matrix, and then represent the problem in terms of tropical algebra. The solution consists in introducing two new variables that describe the values of the objective functions and allow reducing the problem to the solution of a system of parameterized inequalities constructed for the unknown vector, where the new variables play the role of parameters. We exploit the existence condition for solutions of the system to derive those values of the parameters that belong to the Pareto front inherent to the problem. Then, we solve the system for the unknown vector and take all solutions that correspond to the Pareto front, as a complete solution of the bi-objective optimization problem.

The paper is arranged as follows. In Section~\ref{S-LCABS}, we outline the use of log-Chebyshev approximation to evaluate alternatives from pairwise comparisons. Section~\ref{S-PADR} includes preliminary notation, definitions and results of tropical algebra needed to represent and solve the optimization problem below. Section~\ref{S-BOOPWBC} presents a complete solution of a bi-objective tropical optimization problem with box constraints, given in terms of a linearly ordered, radicable tropical semifield. In Section~\ref{S-ABCBCDP}, we apply the obtained result to solve a bi-criteria pairwise comparison problem and give an illustrative example. Section~\ref{S-C} offers some concluding remarks on the results obtained.

\section{Log-Chebyshev approximation based solutions}
\label{S-LCABS}

Suppose that, given a positive symmetrically reciprocal matrix $\bm{A}=(a_{ij})$ of pairwise comparisons of $n$ alternatives, the problem is to find the vector $\bm{x}=(x_{j})$ of absolute ratings by means of approximation of the matrix $\bm{A}$ by a consistent matrix $\bm{X}=(x_{i}/x_{j})$. Approximation in the log-Chebyshev sense with a logarithm to a base greater than $1$ naturally leads us to solving the following optimization problem:
\begin{equation*}
\begin{aligned}
\min_{x_{1},\ldots,x_{n}}
&&&
\max_{1\leq i,j\leq n}
\left|
\log a_{ij}-\log\frac{x_{i}}{x_{j}}
\right|.
\end{aligned}
\end{equation*}

Let us verify that this problem reduces to minimizing a function without logarithms (see also \cite{Krivulin2020Using,Krivulin2019Tropical}). First, we apply the monotonicity of the logarithmic function to write
\begin{equation*}
\left|
\log a_{ij}-\log\frac{x_{i}}{x_{j}}
\right|
=
\max\left\{\
\log\frac{a_{ij}x_{j}}{x_{i}},\log\frac{x_{i}}{a_{ij}x_{j}}
\right\}
=
\log
\max\left\{\
\frac{a_{ij}x_{j}}{x_{i}},\frac{x_{i}}{a_{ij}x_{j}}
\right\}.
\end{equation*}

Then, with the condition $a_{ij}=1/a_{ji}$, the objective function in the problem becomes
\begin{equation*}
\max_{1\leq i,j\leq n}
\left|
\log a_{ij}-\log\frac{x_{i}}{x_{j}}
\right|
=
\log\max_{1\leq i,j\leq n}\max\left\{
\frac{a_{ij}x_{j}}{x_{i}},\frac{x_{i}}{a_{ij}x_{j}}
\right\}
=
\log\max_{1\leq i,j\leq n}\frac{a_{ij}x_{j}}{x_{i}}.
\end{equation*}

Finally, due to the monotonicity, the minimization of the logarithm is equivalent to minimizing its argument, which turns the approximation problem into the problem
\begin{equation*}
\begin{aligned}
\min_{x_{1},\ldots,x_{n}}
&&
\max_{1\leq i,j\leq n}\frac{a_{ij}x_{j}}{x_{i}}.
\end{aligned}
\end{equation*}

Note that any solution of the last problem also minimizes the maximum absolute relative error \cite{Elsner2004Maxalgebra,Krivulin2020Using,Portugal2011Weberfechner}, which is given by
\begin{equation*}
\max_{1\leq i,j\leq n}\frac{|a_{ij}-x_{i}/x_{j}|}{a_{ij}}.
\end{equation*}

Furthermore, we assume that $n$ alternatives are compared in pairs according to two criteria, which yields two matrices $\bm{A}=(a_{ij})$ and $\bm{B}=(b_{ij})$ of pairwise comparisons. The problem of evaluating the absolute ratings of alternatives takes the form of the approximation of these matrices by a common consistent matrix $\bm{X}=(x_{i}/x_{j})$. With the application of the log-Chebyshev approximation, we arrive at a bi-objective optimization problem to minimize the vector function
\begin{equation*}
\left(
\max_{1\leq i,j\leq n}\frac{a_{ij}x_{j}}{x_{i}},
\max_{1\leq i,j\leq n}\frac{b_{ij}x_{j}}{x_{i}}
\right).
\end{equation*}

Now, assume that there are box constraints imposed on the ratings of alternatives, given in the form of lower and upper bounds and written as the double inequalities
\begin{equation*}
g_{i}
\leq
x_{i}
\leq
h_{i},
\qquad
i=1,\ldots,n.
\end{equation*}

By combining the objective function with the constraints, we obtain the following constrained bi-objective optimization problem: given positive matrices $\bm{A}=(a_{ij})$ and $\bm{B}=(b_{ij})$, and vectors $\bm{g}=(g_{j})$ and $\bm{h}=(h_{j})$, find positive vectors $\bm{x}=(x_{j})$ that achieve
\begin{equation}
\begin{aligned}
\min_{x_{1},\ldots,x_{n}}
&&&
\left(
\max_{1\leq i,j\leq n}x_{i}^{-1}a_{ij}x_{j},
\max_{1\leq i,j\leq n}x_{i}^{-1}b_{ij}x_{j}
\right);
\\
\text{s.t.}
&&&
g_{j}
\leq
x_{j}
\leq
h_{j},
\qquad
j=1,\ldots,n.
\end{aligned}
\label{P-min_maxijaijxixj-maxijbijxixj-gileqxilehi}
\end{equation}

In the next sections, we show how problem \eqref{P-min_maxijaijxixj-maxijbijxixj-gileqxilehi} can be represented in terms of tropical algebra and then completely solved in the Pareto-optimal sense by using methods and results of tropical optimization.

\section{Preliminary algebraic definitions and results}
\label{S-PADR}

We start with an overview of basic notation, definitions and results of tropical (idempotent) algebra, which underlie the solution of the bi-objective optimization problem in the subsequent sections. For further detail on the theory and applications of tropical mathematics, one can consult, e.\,g. \cite{Golan2003Semirings,Gondran2008Graphs,Heidergott2006Maxplus,Kolokoltsov1997Idempotent,Maclagan2015Introduction}.

\subsection{Idempotent semifield}

Consider a nonempty set $\mathbb{X}$ equipped with the operations $\oplus$ (addition) and $\otimes$ (multiplication) together with their respective neutral elements $\mathbb{0}$ (zero) and $\mathbb{1}$ (one). Both operations $\oplus$ and $\otimes$ are associative and commutative, and multiplication distributes over addition. Addition has the idempotent property $x\oplus x=x$ for all $x\in\mathbb{X}$, and there are multiplicative inverses $x^{-1}$ such that $xx^{-1}=\mathbb{1}$ for all nonzero $x\in\mathbb{X}$ (here and henceforth, the multiplication sign $\otimes$ is omitted for the brevity sake). The algebraic system $(\mathbb{X},\oplus,\otimes,\mathbb{0},\mathbb{1})$ is commonly referred to as the idempotent (tropical) semifield.

The operation of exponentiation to integer powers is given by $x^{p}=xx^{p-1}$, $x^{-p}=(x^{-1})^{p}$, $\mathbb{0}^{p}=\mathbb{0}$ and $x^{0}=\mathbb{1}$ for any nonzero $x\in\mathbb{X}$ and integer $p>0$. Moreover, the semifield is considered radicable, which means that each $x$ has $p$th root for any integer $p>0$, and thus rational exponents are defined as well.

Idempotent addition provides a partial order that $x\leq y$ if and only if $x\oplus y=y$, which is extended to a total order on $\mathbb{X}$. The operations of addition and multiplication are monotonic in both arguments, which means that the inequality $x\leq y$ yields the inequalities $x\oplus z\leq y\oplus z$ and $xz\leq yz$. Exponentiation is monotonic in the sense that for nonzero $x,y$ and rational $q$, the inequality $x\leq y$ leads to $x^{q}\geq y^{q}$ if $q\leq0$ and to $x^{q}\leq y^{q}$ if $q>0$. Furthermore, addition satisfies an extremal property (the majority law) that the inequalities $x\leq x\oplus y$ and $y\leq x\oplus y$ hold for any $x,y$. Finally, the inequality $x\oplus y\leq x$ is equivalent to the system of two inequalities $x\leq z$ and $y\leq z$.

An example of the algebraic system under consideration is the idempotent semifield $\mathbb{R}_{\max}=(\mathbb{R}_{+},\max,\times,0,1)$ (also called the max-algebra), where $\mathbb{R}_{+}$ is the set of nonnegative reals. In the semifield, addition $\oplus$ is defined as taking maximum, multiplication is as usual, $\mathbb{0}=0$ and $\mathbb{1}=1$. The operation of exponentiation has the usual meaning, the order provided by idempotent addition corresponds to the natural linear order on $\mathbb{R}_{+}$.

Another example is the semifield $\mathbb{R}_{\min,+}=(\mathbb{R}\cup\{+\infty\},\min,+,+\infty,0)$ (the min-plus algebra), where $\mathbb{R}$ is the set of reals, which has $\oplus=\min$, $\otimes=+$, $\mathbb{0}=+\infty$ and $\mathbb{1}=0$. For each $x\ne\mathbb{0}$, there is the inverse $x^{-1}$, which is equal to the opposite number $-x$ in the usual arithmetic. The power $x^{y}$ coincides with the arithmetic product $xy$ and hence well-defined for any $x,y\in\mathbb{R}$. The order induced by the addition $\oplus$ is opposite to the standard linear order on $\mathbb{R}$.

\subsection{Matrices and vectors}

The algebra of matrices and vectors over $\mathbb{X}$ is introduced in the usual way with the set of matrices with $m$ rows and $n$ columns denoted by $\mathbb{X}^{m\times n}$, and the set of column vectors with $n$ components by $\mathbb{X}^{n}$. A matrix (vector) with all entries equal to $\mathbb{0}$ is the zero matrix (vector), which is denoted by $\bm{0}$. A matrix is called column-regular if it has no zero columns. A vector without zero components is called regular.

The matrix and vector operations follow the standard entrywise formulas, where the ordinary scalar addition and multiplication are replaced by the operations $\oplus$ and $\otimes$. The monotonicity properties of the scalar operations $\oplus$ and $\otimes$ are routinely extended to the matrix and vector operations, where the order relation is understood componentwise.

In the set of square matrices $\mathbb{X}^{n\times n}$, a matrix that has all diagonal entries equal to $\mathbb{1}$ and the other entries to $\mathbb{0}$ is the identity matrix denoted $\bm{I}$. Nonnegative integer powers of a square matrix $\bm{A}\ne\bm{0}$ indicate repeated multiplication given by $\bm{A}^{0}=\bm{I}$ and $\bm{A}^{p}=\bm{A}\bm{A}^{p-1}$ for any integer $p>0$. The trace of a matrix $\bm{A}=(a_{ij})$ is defined as
\begin{equation*}
\mathop\mathrm{tr}\bm{A}
=
a_{11}\oplus\cdots\oplus a_{nn}
=
\bigoplus_{i=1}^{n}a_{ii}.
\end{equation*}

For any conforming matrices $\bm{A}$ and $\bm{B}$, and scalar $x$, the trace satisfies the equalities
\begin{equation*}
\mathop\mathrm{tr}(\bm{A}\oplus\bm{B})
=
\mathop\mathrm{tr}\bm{A}
\oplus
\mathop\mathrm{tr}\bm{B},
\qquad
\mathop\mathrm{tr}(\bm{A}\bm{B})
=
\mathop\mathrm{tr}(\bm{B}\bm{A}),
\qquad
\mathop\mathrm{tr}(x\bm{A})
=
x\mathop\mathrm{tr}\bm{A}.
\end{equation*}

For any matrix $\bm{A}\in\mathbb{X}^{n\times n}$, a trace function is defined that takes the matrix to
\begin{equation*}
\mathop\mathrm{Tr}(\bm{A})
=
\mathop\mathrm{tr}\bm{A}
\oplus
\cdots
\oplus
\mathop\mathrm{tr}\bm{A}^{n}
=
\bigoplus_{k=1}^{n}\mathop\mathrm{tr}\bm{A}^{k}.
\end{equation*}

Provided that $\mathop\mathrm{Tr}(\bm{A})\leq\mathbb{1}$, the Kleene star operator maps the matrix $\bm{A}$ to the matrix
\begin{equation*}
\bm{A}^{\ast}
=
\bm{I}
\oplus
\bm{A}
\oplus
\cdots
\oplus
\bm{A}^{n-1}
=
\bigoplus_{k=0}^{n-1}\bm{A}^{k}.
\end{equation*}

For any nonzero column vector $\bm{x}=(x_{j})$, the multiplicative inverse transpose (conjugate) is a row vector $\bm{x}^{-}=(x_{j}^{-})$ where $x_{j}^{-}=x_{j}^{-1}$ if $x_{j}\ne\mathbb{0}$ and $x_{j}^{-}=\mathbb{0}$ otherwise.

The spectral radius of any matrix $\bm{A}\in\mathbb{X}^{n\times n}$ is given by
\begin{equation*}
\lambda
=
\mathop\mathrm{tr}\bm{A}
\oplus
\cdots
\oplus
\mathop\mathrm{tr}\nolimits^{1/n}(\bm{A}^{m})
=
\bigoplus_{k=1}^{n}\mathop\mathrm{tr}\nolimits^{1/k}(\bm{A}^{k}).
\end{equation*}

\subsection{Vector inequalities}

We turn to solutions of vector inequalities that provide a basis to handle the tropical optimization problems below. First assume that, given a matrix $\bm{A}\in\mathbb{X}^{m\times n}$ and a vector $\bm{d}\in\mathbb{X}^{m}$, the problem is to find all vectors $\bm{x}\in\mathbb{X}^{n}$ that satisfy the inequality
\begin{equation}
\bm{A}\bm{x}
\leq
\bm{d}.
\label{I-Axleqd}
\end{equation}

The solution of \eqref{I-Axleqd} is known under different assumptions in various forms. We use a solution described by the following statement (see, e.\,g. \cite{Krivulin2015Extremal}).
\begin{lemma}
\label{L-Axleqd}
For any column-regular matrix $\bm{A}$ and regular vector $\bm{d}$, all solutions of \eqref{I-Axleqd} are given by the inequality $\bm{x}\leq(\bm{d}^{-}\bm{A})^{-}$.
\end{lemma}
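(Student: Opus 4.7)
My plan is to establish the equivalence between $\bm{A}\bm{x}\leq\bm{d}$ and $\bm{x}\leq(\bm{d}^{-}\bm{A})^{-}$ by proving each implication separately, relying on two basic features of the semifield recorded in the preliminaries: the scalar identity $xx^{-1}=\mathbb{1}$ for every nonzero $x$, and the decomposition of idempotent sums, $\bigoplus_{i}z_{i}\leq c$ if and only if $z_{i}\leq c$ for every $i$. As a preliminary check, I would verify that the right-hand side $(\bm{d}^{-}\bm{A})^{-}$ is well defined: regularity of $\bm{d}$ makes every entry of $\bm{d}^{-}$ nonzero, and column-regularity of $\bm{A}$ forces each component of the row vector $\bm{d}^{-}\bm{A}$ to be a sum containing at least one nonzero product, so $\bm{d}^{-}\bm{A}$ is a regular row vector whose conjugate is a column vector in $\mathbb{X}^{n}$.

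For the forward direction, I would left-multiply $\bm{A}\bm{x}\leq\bm{d}$ by $\bm{d}^{-}$ and use monotonicity to get $\bm{d}^{-}\bm{A}\bm{x}\leq\bm{d}^{-}\bm{d}=\mathbb{1}$. Writing $\bm{u}=\bm{d}^{-}\bm{A}=(u_{1},\dots,u_{n})$, the inequality $\bigoplus_{j}u_{j}x_{j}\leq\mathbb{1}$ decomposes into $u_{j}x_{j}\leq\mathbb{1}$ for each $j$, and multiplying by $u_{j}^{-1}$ (available because $\bm{u}$ is regular) yields $x_{j}\leq u_{j}^{-1}=u_{j}^{-}$, i.e.\ $\bm{x}\leq\bm{u}^{-}=(\bm{d}^{-}\bm{A})^{-}$.

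The reverse direction retraces the same steps. From $\bm{x}\leq\bm{u}^{-}$ one has $u_{j}x_{j}\leq\mathbb{1}$ componentwise, which aggregates to $\bm{d}^{-}\bm{A}\bm{x}\leq\mathbb{1}$; setting $\bm{y}=\bm{A}\bm{x}$ and decomposing $\bigoplus_{i}d_{i}^{-1}y_{i}\leq\mathbb{1}$ termwise gives $d_{i}^{-1}y_{i}\leq\mathbb{1}$, and multiplying by $d_{i}$ returns $y_{i}\leq d_{i}$, so $\bm{A}\bm{x}\leq\bm{d}$. I do not foresee a substantive obstacle; each direction is a two-line manipulation. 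The only point requiring attention is the role of the regularity hypotheses on $\bm{A}$ and $\bm{d}$, which are precisely what make every conjugate appearing in the argument well defined and every scalar cancellation $uu^{-1}=dd^{-1}=\mathbb{1}$ legitimate.
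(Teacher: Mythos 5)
Your proof is correct: both directions reduce, via the decomposition rule for idempotent sums and cancellation by nonzero scalars, to the componentwise equivalence $a_{ij}x_{j}\leq d_{i}\Leftrightarrow x_{j}\leq d_{i}a_{ij}^{-1}$, and your preliminary check that $\bm{d}^{-}\bm{A}$ is regular is exactly where the hypotheses on $\bm{A}$ and $\bm{d}$ are consumed. The paper states this lemma without proof, citing an external reference, and your argument is the standard residuation proof found there, so there is nothing to contrast.
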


Suppose that for a given square matrix $\bm{A}\in\mathbb{X}^{n\times n}$ and vector $\bm{c}\in\mathbb{X}^{n}$, we find regular vectors $\bm{x}\in\mathbb{X}^{n}$ to solve the inequality
\begin{equation}
\bm{A}\bm{x}
\oplus
\bm{c}
\leq
\bm{x}.
\label{I-Axcleqx}
\end{equation}

A complete solution of the inequality is provided by the next result \cite{Krivulin2015Multidimensional}.
\begin{theorem}
\label{T-Axcleqx}
For any square matrix $\bm{A}$, the following statements are true.
\begin{enumerate}
\item
If $\mathop\mathrm{Tr}(\bm{A})\leq\mathbb{1}$, then all regular solutions of \eqref{I-Axcleqx} are given in parametric form by $\bm{x}=\bm{A}^{\ast}\bm{u}$ where $\bm{u}$ is a vector of parameters such that $\bm{u}\geq\bm{c}$.
\item
If $\mathop\mathrm{Tr}(\bm{A})>\mathbb{1}$, then there is no regular solution.
\end{enumerate}
\end{theorem}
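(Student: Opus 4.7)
The plan is to prove the two items separately: item 1 by establishing both that the parametric form is a solution and that every regular solution has this form, and item 2 by contradiction using the same iterative argument that drives necessity in item 1.

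For sufficiency in item 1, I would substitute $\bm{x}=\bm{A}^{\ast}\bm{u}$ directly into \eqref{I-Axcleqx}. The key structural fact I plan to use is $\bm{A}\bm{A}^{\ast}\leq\bm{A}^{\ast}$: the hypothesis $\mathop\mathrm{Tr}(\bm{A})\leq\mathbb{1}$ forces the Kleene series to stabilize at $n-1$ terms, so $\bm{A}^{n}\leq\bm{A}^{\ast}$, and hence $\bm{A}\bm{A}^{\ast}=\bm{A}\oplus\bm{A}^{2}\oplus\cdots\oplus\bm{A}^{n}\leq\bm{A}^{\ast}$. Combining with $\bm{I}\leq\bm{A}^{\ast}$ and $\bm{u}\geq\bm{c}$, monotonicity yields
\begin{equation*}
\bm{A}\bm{A}^{\ast}\bm{u}\oplus\bm{c}
\leq
\bm{A}^{\ast}\bm{u}\oplus\bm{A}^{\ast}\bm{u}
=
\bm{A}^{\ast}\bm{u},
\end{equation*}
which is \eqref{I-Axcleqx} with $\bm{x}=\bm{A}^{\ast}\bm{u}$.

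For necessity, I would take a regular $\bm{x}$ solving \eqref{I-Axcleqx} and split the inequality into $\bm{A}\bm{x}\leq\bm{x}$ together with $\bm{c}\leq\bm{x}$. Iterating the first, $\bm{A}^{k}\bm{x}\leq\bm{x}$ for every $k\geq1$, and aggregating with the trivial identity $\bm{I}\bm{x}=\bm{x}$ gives $\bm{A}^{\ast}\bm{x}\leq\bm{x}$. The reverse bound $\bm{x}\leq\bm{A}^{\ast}\bm{x}$ follows at once from $\bm{I}\leq\bm{A}^{\ast}$, so $\bm{x}=\bm{A}^{\ast}\bm{x}$. Choosing the parameter $\bm{u}=\bm{x}$ then reproduces the parametric form, with $\bm{u}\geq\bm{c}$ inherited from the original constraint. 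For item 2, I would argue by contradiction: if a regular $\bm{x}$ solved \eqref{I-Axcleqx}, the same iteration gives $\bm{A}^{k}\bm{x}\leq\bm{x}$ for all $k$, and reading componentwise with $x_{i}$ invertible yields $(\bm{A}^{k})_{ii}\leq\mathbb{1}$, hence $\mathop\mathrm{tr}\bm{A}^{k}\leq\mathbb{1}$ for $k=1,\ldots,n$; summing these produces $\mathop\mathrm{Tr}(\bm{A})\leq\mathbb{1}$, contradicting the assumption $\mathop\mathrm{Tr}(\bm{A})>\mathbb{1}$.

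The step I expect to demand the most care is the truncation $\bm{A}^{n}\leq\bm{A}^{\ast}$ underlying $\bm{A}\bm{A}^{\ast}\leq\bm{A}^{\ast}$, since it is the point where the trace hypothesis is genuinely consumed: one must argue that when $\mathop\mathrm{Tr}(\bm{A})\leq\mathbb{1}$, all higher powers of $\bm{A}$ are dominated by the finite partial sum defining $\bm{A}^{\ast}$. Everything else reduces to the monotonicity of $\oplus$ and $\otimes$ in the matrix setting together with the majority law, both of which are already recorded in the preliminaries.
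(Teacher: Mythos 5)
Your argument is essentially correct, and it is worth noting that the paper itself does not prove Theorem~\ref{T-Axcleqx}: the result is imported from a cited reference, so there is no in-paper proof to compare against. Your decomposition is the standard one --- sufficiency by substituting $\bm{x}=\bm{A}^{\ast}\bm{u}$ and using $\bm{A}\bm{A}^{\ast}\leq\bm{A}^{\ast}$ together with $\bm{I}\leq\bm{A}^{\ast}$; necessity by splitting $\bm{A}\bm{x}\oplus\bm{c}\leq\bm{x}$ into $\bm{A}\bm{x}\leq\bm{x}$ and $\bm{c}\leq\bm{x}$, iterating to obtain $\bm{A}^{\ast}\bm{x}=\bm{x}$ and taking $\bm{u}=\bm{x}$; and item 2 by contradiction, correctly using the majority law to isolate the diagonal term via $(\bm{A}^{k})_{ii}x_{i}\leq\bigoplus_{j}(\bm{A}^{k})_{ij}x_{j}\leq x_{i}$ and cancelling the invertible $x_{i}$ to get $\mathop\mathrm{tr}\bm{A}^{k}\leq\mathbb{1}$ for $k=1,\ldots,n$.

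The one place where your write-up remains a claim rather than a proof is exactly the step you flag: $\bm{A}^{m}\leq\bm{A}^{\ast}$ for $m\geq n$ under $\mathop\mathrm{Tr}(\bm{A})\leq\mathbb{1}$. To close it you need the path--cycle argument: an entry $(\bm{A}^{m})_{ij}$ is the $\oplus$-sum, over paths of length $m$ from $i$ to $j$ in the weighted digraph of $\bm{A}$, of the products of edge weights; for $m\geq n$ any such path revisits a node and hence contains a cycle of some length $l$ with $1\leq l\leq n$, whose weight is one term of $\mathop\mathrm{tr}(\bm{A}^{l})$ and is therefore at most $\mathop\mathrm{Tr}(\bm{A})\leq\mathbb{1}$; deleting the cycle leaves a path of length $m-l$ whose weight dominates the original (multiplying by an element $\leq\mathbb{1}$ cannot increase a product), so $(\bm{A}^{m})_{ij}\leq(\bm{A}^{m-l})_{ij}$, and induction drives the exponent below $n$. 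With that lemma in hand, $\bm{A}\bm{A}^{\ast}=\bm{A}\oplus\cdots\oplus\bm{A}^{n}\leq\bm{A}^{\ast}$ and your sufficiency computation goes through. One further small point: the parametrization describes the regular solutions only for those $\bm{u}$ that make $\bm{A}^{\ast}\bm{u}$ regular (any regular $\bm{u}\geq\bm{c}$ suffices, since $\bm{A}^{\ast}\bm{u}\geq\bm{u}$); this looseness is already present in the theorem statement and is not a defect of your argument.
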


Note that the parametric representation offered by the theorem describes all regular solutions as a subset of the linear span of the columns in the Kleene star matrix $\bm{A}^{\ast}$.

Consider the problem to solve for regular vectors $\bm{x}$ both inequalities \eqref{I-Axleqd} and \eqref{I-Axcleqx} simultaneously, which couple together to form one double inequality
\begin{equation}
\bm{A}\bm{x}
\oplus
\bm{c}
\leq
\bm{x}
\leq
\bm{d}.
\label{I-Axcleqxleqd}
\end{equation}

By combining the results of Lemma~\ref{L-Axleqd} and Theorem~\ref{T-Axcleqx}, we immediately obtain a complete solution of \eqref{I-Axcleqxleqd} (which can also be derived from a more general result in \cite{Krivulin2014Constrained}).
\begin{lemma}
\label{L-Axcleqxleqd}
For any matrix $\bm{A}$ such that $\mathop\mathrm{Tr}(\bm{A})\leq\mathbb{1}$ and regular vector $\bm{d}$, the following statements are true.
\begin{enumerate}
\item
If $\bm{d}^{-}\bm{A}^{\ast}\bm{c}\leq\mathbb{1}$, then all regular solutions of inequality \eqref{I-Axcleqxleqd} are given in parametric form by $\bm{x}=\bm{A}^{\ast}\bm{u}$ where $\bm{u}$ is a vector of parameters such that $\bm{c}\leq\bm{u}\leq(\bm{d}^{-}\bm{A}^{\ast})^{-}$.
\item
If $\bm{d}^{-}\bm{A}^{\ast}\bm{c}>\mathbb{1}$, then there is no regular solution.
\end{enumerate}
\end{lemma}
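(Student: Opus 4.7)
The plan is a two-step reduction that chains the two preceding results. First, under the hypothesis $\mathop\mathrm{Tr}(\bm{A})\leq\mathbb{1}$, I would invoke Theorem~\ref{T-Axcleqx} on the lower inequality $\bm{A}\bm{x}\oplus\bm{c}\leq\bm{x}$ to express every regular solution in the parametric form $\bm{x}=\bm{A}^{\ast}\bm{u}$ with $\bm{u}\geq\bm{c}$. Substituting this representation into the upper bound $\bm{x}\leq\bm{d}$ reduces the original double inequality to the single inequality $\bm{A}^{\ast}\bm{u}\leq\bm{d}$ for $\bm{u}$, together with the constraint $\bm{u}\geq\bm{c}$.

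Next I would apply Lemma~\ref{L-Axleqd} to the transformed inequality. Because $\bm{A}^{\ast}=\bm{I}\oplus\bm{A}\oplus\cdots\oplus\bm{A}^{n-1}$ contains the identity as a summand, its diagonal entries are nonzero and so it is column-regular; together with the regularity of $\bm{d}$, the lemma gives $\bm{u}\leq(\bm{d}^{-}\bm{A}^{\ast})^{-}$. Intersecting with $\bm{u}\geq\bm{c}$ produces the parametric description $\bm{c}\leq\bm{u}\leq(\bm{d}^{-}\bm{A}^{\ast})^{-}$ asserted in item~1, and every such $\bm{u}$ yields via $\bm{x}=\bm{A}^{\ast}\bm{u}$ a regular solution of \eqref{I-Axcleqxleqd}.

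It then remains to encode nonemptiness of this parameter interval as a scalar condition. Componentwise, $\bm{c}\leq(\bm{d}^{-}\bm{A}^{\ast})^{-}$ says that each entry of the row vector $\bm{d}^{-}\bm{A}^{\ast}$ multiplied by the corresponding entry of $\bm{c}$ is bounded by $\mathbb{1}$. Since an idempotent sum of scalars is bounded by $\mathbb{1}$ if and only if every summand is, this is equivalent to the single inequality $\bm{d}^{-}\bm{A}^{\ast}\bm{c}\leq\mathbb{1}$. When this condition fails the feasible set for $\bm{u}$ is empty, and because the parametrization from Theorem~\ref{T-Axcleqx} exhausts the regular solutions of the lower inequality, no regular $\bm{x}$ can satisfy \eqref{I-Axcleqxleqd}, yielding item~2.

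The main delicate point will be justifying that the reduction loses nothing: one must confirm that the parametrization from Theorem~\ref{T-Axcleqx} really captures every regular $\bm{x}$ satisfying the lower inequality (so that no solutions are lost when we pass to the $\bm{u}$-space), and that $\bm{A}^{\ast}$ is column-regular so that Lemma~\ref{L-Axleqd} applies verbatim with $\bm{A}^{\ast}$ in place of $\bm{A}$. Once these points are in hand, the translation between the vector bound $\bm{c}\leq(\bm{d}^{-}\bm{A}^{\ast})^{-}$ and the scalar existence condition is a routine instance of conjugate duality.
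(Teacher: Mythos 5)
Your proposal is correct and follows essentially the same route as the paper's proof: apply Theorem~\ref{T-Axcleqx} to the lower inequality, substitute $\bm{x}=\bm{A}^{\ast}\bm{u}$ into the upper bound, use column-regularity of $\bm{A}^{\ast}$ (from $\bm{A}^{\ast}\geq\bm{I}$) to invoke Lemma~\ref{L-Axleqd}, and read off the existence condition $\bm{d}^{-}\bm{A}^{\ast}\bm{c}\leq\mathbb{1}$ from nonemptiness of the parameter interval. The only cosmetic difference is that the paper justifies the equivalence $\bm{c}\leq(\bm{d}^{-}\bm{A}^{\ast})^{-}\Leftrightarrow\bm{d}^{-}\bm{A}^{\ast}\bm{c}\leq\mathbb{1}$ by citing Lemma~\ref{L-Axleqd} again, whereas you argue it componentwise.
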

\begin{proof}
Under the condition $\mathop\mathrm{Tr}(\bm{A})\leq\mathbb{1}$, application of Theorem~\ref{T-Axcleqx} to the left inequality at \eqref{I-Axcleqxleqd} yields the solution $\bm{x}=\bm{A}^{\ast}\bm{u}$ where $\bm{u}\geq\bm{c}$. Substitution into the right inequality leads to an inequality for $\bm{u}$ in the form $\bm{A}^{\ast}\bm{u}\leq\bm{d}$. Observing that $\bm{A}^{\ast}\geq\bm{I}$, and thus $\bm{A}^{\ast}$ is column-regular, we apply Lemma~\ref{L-Axleqd} to obtain $\bm{u}\leq(\bm{d}^{-}\bm{A}^{\ast})^{-}$.

By combining both lower and upper bounds obtained for the vector $\bm{u}$, we arrive at the double inequality $\bm{c}\leq\bm{u}\leq(\bm{d}^{-}\bm{A}^{\ast})^{-}$. The last inequality determines a nonempty set of vectors $\bm{u}$ if and only if the condition $\bm{c}\leq(\bm{d}^{-}\bm{A}^{\ast})^{-}$ holds, which is equivalent by Lemma~\ref{L-Axleqd} to the inequality $\bm{d}^{-}\bm{A}^{\ast}\bm{c}\leq\mathbb{1}$.
\qed
\end{proof}

\subsection{Binomial identities}

We now present binomial identities for matrices (see also \cite{Krivulin2017Direct,Krivulin2020Using}), which are used to develop and represent further results. First, we note that for any square matrices $\bm{A},\bm{B}\in\mathbb{X}^{n\times n}$, straightforward algebra yields the following expansion formula:
\begin{equation}
(\bm{A}\oplus\bm{B})^{m}
=
\bm{A}^{m}
\oplus
\bigoplus_{k=1}^{m-1}
\bigoplus_{\substack{i_{0}+i_{1}+\cdots+i_{k}=m-k\\i_{0},i_{1},\ldots,i_{k}\geq0}}
\bm{A}^{i_{0}}(\bm{B}\bm{A}^{i_{1}}\cdots\bm{B}\bm{A}^{i_{k}})
\oplus
\bm{B}^{m}.
\label{E-AoplusBm}
\end{equation}

Consider the second term of the form of the double sum on the right-hand side. After summation of this term over all $m=2,\ldots,n$, we rearrange the summands in increasing order of $k$ to obtain the equality
\begin{multline*}
\bigoplus_{m=2}^{n}
\bigoplus_{k=1}^{m-1}
\bigoplus_{\substack{i_{0}+i_{1}+\cdots+i_{k}=m-k\\i_{0},i_{1},\ldots,i_{k}\geq0}}
\bm{A}^{i_{0}}(\bm{B}\bm{A}^{i_{1}}\cdots\bm{B}\bm{A}^{i_{k}})
\\=
\bigoplus_{k=1}^{n-1}
\bigoplus_{m=1}^{n-k}
\bigoplus_{\substack{i_{0}+i_{1}+\cdots+i_{k}=m\\i_{0},i_{1},\ldots,i_{k}\geq0}}
\bm{A}^{i_{0}}(\bm{B}\bm{A}^{i_{1}}\cdots\bm{B}\bm{A}^{i_{k}}).
\end{multline*}

Summing both sides of the expansion formula at \eqref{E-AoplusBm} for all $m=1,\ldots,n$, using the last equality and taking trace result in the binomial identity
\begin{equation}
\mathop\mathrm{Tr}(\bm{A}\oplus\bm{B})
=
\mathop\mathrm{Tr}(\bm{A})
\oplus
\mathop\mathrm{tr}
\bigoplus_{k=1}^{n-1}
\bigoplus_{m=1}^{n-k}
\bigoplus_{\substack{i_{0}+i_{1}+\cdots+i_{k}=m\\i_{0},i_{1},\ldots,i_{k}\geq0}}
\bm{A}^{i_{0}}(\bm{B}\bm{A}^{i_{1}}\cdots\bm{B}\bm{A}^{i_{k}})
\oplus
\mathop\mathrm{Tr}(\bm{B}).
\label{E-TrAoplusB}
\end{equation}

As a direct consequence of the identity, the following two inequalities are valid:
\begin{equation*}
\mathop\mathrm{Tr}(\bm{A}\oplus\bm{B})
\geq
\mathop\mathrm{Tr}(\bm{A}),
\qquad
\mathop\mathrm{Tr}(\bm{A}\oplus\bm{B})
\geq
\mathop\mathrm{Tr}(\bm{B}).
\end{equation*}

Suppose the condition $\mathop\mathrm{Tr}(\bm{A}\oplus\bm{B})\leq\mathbb{1}$ holds, and note that the inequalities $\mathop\mathrm{Tr}(\bm{A})\leq\mathbb{1}$ and $\mathop\mathrm{Tr}(\bm{B})\leq\mathbb{1}$ are then satisfied as well. In a similar way as before, we sum the second term on the right-hand side of \eqref{E-AoplusBm} over all $m=2,\ldots,n-1$, and rearrange the terms. As a result, we obtain the same equality as above with $n$ replaced by $n-1$. Finally, application of the Kleene star operator to \eqref{E-AoplusBm} leads to another binomial identity
\begin{equation}
(\bm{A}\oplus\bm{B})^{\ast}
=
\bm{A}^{\ast}
\oplus
\bigoplus_{k=1}^{n-2}
\bigoplus_{m=1}^{n-k-1}
\bigoplus_{\substack{i_{0}+i_{1}+\cdots+i_{k}=m\\i_{0},i_{1},\ldots,i_{k}\geq0}}
\bm{A}^{i_{0}}(\bm{B}\bm{A}^{i_{1}}\cdots\bm{B}\bm{A}^{i_{k}})
\oplus
\bm{B}^{\ast}.
\label{E-AoplusBast}
\end{equation}

\subsection{Polynomial functions}

We conclude with remarks about polynomial functions involved in the derivation of solutions presented below (see also \cite{Krivulin2021Algebraic}). Suppose that $a_{km}\in\mathbb{X}$ for all $k=1,\ldots,n-1$ and $m=1,\ldots, n-k$, and consider tropical polynomial functions on $\mathbb{X}$ with rational exponents (tropical Puiseux polynomials), which take any $s,t>\mathbb{0}$ to
\begin{equation*}
G(s)
=
\bigoplus_{k=1}^{n-1}
\bigoplus_{m=1}^{n-k}
a_{km}^{1/k}
s^{-m/k},
\qquad
H(t)
=
\bigoplus_{k=1}^{n-1}
\bigoplus_{m=1}^{n-k}
a_{km}^{1/m}
t^{-k/m}.
\end{equation*}

First, we note that both functions $G$ and $H$ monotonically decrease as their arguments increase. We now verify that these functions are inverse to each other. Indeed, if the equality $G(s)=t$ holds, then the following inequalities are valid as well:
\begin{equation*}
a_{km}^{1/k}
s^{-m/k}
\leq
t,
\qquad
k=1,\ldots,n-1;
\qquad
m=1,\ldots,n-k;
\end{equation*}
where at least one inequality holds as an equality.

Furthermore, we solve these inequalities for $s$ to obtain the inequalities
\begin{equation*}
a_{km}^{1/m}
t^{-k/m}
\leq
s,
\qquad
k=1,\ldots,n-1;
\qquad
m=1,\ldots,n-k.
\end{equation*}

After combining these inequalities, one of which is an equality, we get $H(t)=s$.

As a consequence, the inequalities $G(s)\leq t$ and $H(t)\leq s$ are equivalent since the solution of one of them is given by the other and vice versa.

\section{Bi-objective optimization problem with box constraints}
\label{S-BOOPWBC}

In this section, we examine a bi-objective tropical optimization problem where two multiplicative conjugate quadratic (pseudo-quadratic) forms are to be minimized subject to box constraints. We present a result that complements and extends solutions to the problems without constraints and with two-sided linear constraints, obtained in \cite{Krivulin2020Using,Krivulin2021Algebraic}.

Given matrices $\bm{A},\bm{B}\in\mathbb{X}^{n\times n}$ and vectors $\bm{g},\bm{h}\in\mathbb{X}^{n}$, the problem is to find regular vectors $\bm{x}\in\mathbb{X}^{n}$ that achieve
\begin{equation}
\begin{aligned}
\min_{\bm{x}}
&&&
(\bm{x}^{-}\bm{A}\bm{x},\ \bm{x}^{-}\bm{B}\bm{x});
\\
\text{s.t.}
&&&
\bm{g}
\leq
\bm{x}
\leq
\bm{h}.
\label{P-minxAxxBx-gleqxleqh}
\end{aligned}
\end{equation}

To solve the bi-objective problem, we introduce two auxiliary variables that represent the values of the objective functions, and then determine the Pareto front for the objectives. Furthermore, we reduce the problem to a bi-objective problem of minimizing the auxiliary variables subject to a parametrized system of inequality constraints on the vector $\bm{x}$ with these variables serving as parameters. The solution of the problem obtained involves two stages: first, the existence conditions for solutions of the system of constraints are used to evaluate the optimal values of the variables in the Pareto front, and second, all solutions $\bm{x}$ that correspond to the parameters given by the Pareto front are taken as a complete Pareto-optimal solution of the original problem.

\subsection{Solution of bi-objective problem}

We now formulate and prove the next result, which offers a complete direct Pareto-optimal solution to the constrained bi-objective optimization problem at \eqref{P-minxAxxBx-gleqxleqh}.

\begin{theorem}
\label{T-minxAxxBx-gleqxleqh}
Let $\bm{A}$ and $\bm{B}$ be nonzero matrices, $\bm{g}$ be a vector and $\bm{h}$ a regular vector such that $\bm{g}\leq\bm{h}$. Denote the spectral radii of $\bm{A}$ and $\bm{B}$ by
\begin{equation*}
\lambda
=
\bigoplus_{k=1}^{n}
\mathop\mathrm{tr}\nolimits^{1/k}(\bm{A}^{k}),
\qquad
\mu
=
\bigoplus_{k=1}^{n}
\mathop\mathrm{tr}\nolimits^{1/k}(\bm{B}^{k}),
\end{equation*}
and introduce the scalars
\begin{equation*}
\gamma
=
\bigoplus_{k=1}^{n-1}
(\bm{h}^{-}\bm{A}^{k}\bm{g})^{1/k},
\qquad
\delta
=
\bigoplus_{k=1}^{n-1}
(\bm{h}^{-}\bm{B}^{k}\bm{g})^{1/k}.
\end{equation*}
For all $s,t>\mathbb{0}$, define the functions
\begin{equation*}
\begin{aligned}
G(s)
&=
\bigoplus_{k=1}^{n-1}
\bigoplus_{m=1}^{n-k}
s^{-m/k}
\mathop\mathrm{tr}\nolimits^{1/k}(\bm{F}_{km})
\oplus
\bigoplus_{k=1}^{n-2}
\bigoplus_{m=1}^{n-k-1}
s^{-m/k}
(\bm{h}^{-}
\bm{F}_{km}
\bm{g})^{1/k},
\\
H(t)
&=
\bigoplus_{k=1}^{n-1}
\bigoplus_{m=1}^{n-k}
t^{-k/m}
\mathop\mathrm{tr}\nolimits^{1/m}(\bm{F}_{km})
\oplus
\bigoplus_{k=1}^{n-2}
\bigoplus_{m=1}^{n-k-1}
t^{-k/m}
(\bm{h}^{-}
\bm{F}_{km}
\bm{g})^{1/m},
\end{aligned}
\end{equation*}
where, for $m=1,\ldots,n-k$ and $k=1,\ldots,n-1$, the following notation is used:
\begin{equation*}
\bm{F}_{km}
=
\bigoplus_{\substack{i_{0}+i_{1}+\cdots+i_{k}=m\\i_{0},i_{1},\ldots,i_{k}\geq0}}
\bm{A}^{i_{0}}(\bm{B}\bm{A}^{i_{1}}\cdots\bm{B}\bm{A}^{i_{k}}).
\end{equation*}
Then, the following statements hold.
\begin{enumerate}
\item
If $H(\mu\oplus\delta)\leq\lambda\oplus\gamma$, then the Pareto front of problem \eqref{P-minxAxxBx-gleqxleqh} reduces to a single point $(\alpha,\beta)$ with the coordinates
\begin{equation*}
\alpha
=
\lambda\oplus\gamma,
\qquad
\beta
=
\mu\oplus\delta.
\end{equation*}
\item
If $H(\mu\oplus\delta)>\lambda\oplus\gamma$, then the Pareto front forms a segment of points $(\alpha,\beta)$ defined by the conditions
\begin{equation*}
\lambda\oplus\gamma
\leq
\alpha
\leq
H(\mu\oplus\delta),
\qquad
\beta
=
G(\alpha).
\end{equation*}
\item
All Pareto-optimal solutions of problem \eqref{P-minxAxxBx-gleqxleqh} are given in parametric form by
\begin{equation*}
\bm{x}
=
(\alpha^{-1}\bm{A}
\oplus
\beta^{-1}\bm{B})^{\ast}
\bm{u},
\end{equation*}
where $\bm{u}$ is a vector of parameters that satisfies the condition
\begin{equation*}
\bm{g}
\leq
\bm{u}
\leq
(\bm{h}^{-}
(\alpha^{-1}\bm{A}
\oplus
\beta^{-1}\bm{B})^{\ast})^{-}.
\end{equation*}
\end{enumerate}
\end{theorem}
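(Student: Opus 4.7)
The plan is to reduce the vector minimization in \eqref{P-minxAxxBx-gleqxleqh} to a parametric feasibility problem in two scalars. I would introduce variables $\alpha,\beta>\mathbb{0}$ as upper bounds on the two objectives, so that finding Pareto-optimal solutions amounts to identifying the Pareto-minimal pairs $(\alpha,\beta)$ for which the system $\bm{x}^{-}\bm{A}\bm{x}\leq\alpha$, $\bm{x}^{-}\bm{B}\bm{x}\leq\beta$, $\bm{g}\leq\bm{x}\leq\bm{h}$ has a regular solution $\bm{x}$. Since $\bm{x}^{-}\bm{A}\bm{x}\leq\alpha$ is equivalent to $\alpha^{-1}\bm{A}\bm{x}\leq\bm{x}$ for regular $\bm{x}$ (and likewise for $\bm{B}$), these conditions merge into the single double inequality $(\alpha^{-1}\bm{A}\oplus\beta^{-1}\bm{B})\bm{x}\oplus\bm{g}\leq\bm{x}\leq\bm{h}$, to which Lemma~\ref{L-Axcleqxleqd} applies with $\bm{M}=\alpha^{-1}\bm{A}\oplus\beta^{-1}\bm{B}$. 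That lemma already delivers the parametric form $\bm{x}=\bm{M}^{\ast}\bm{u}$ with $\bm{g}\leq\bm{u}\leq(\bm{h}^{-}\bm{M}^{\ast})^{-}$ claimed in the third statement, provided the feasibility conditions $\mathop\mathrm{Tr}(\bm{M})\leq\mathbb{1}$ and $\bm{h}^{-}\bm{M}^{\ast}\bm{g}\leq\mathbb{1}$ are met.

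The core of the proof is to translate these two scalar feasibility conditions into explicit inequalities on $(\alpha,\beta)$. I would apply the binomial identity \eqref{E-TrAoplusB} to $\mathop\mathrm{Tr}(\bm{M})$ and, once $\mathop\mathrm{Tr}(\bm{M})\leq\mathbb{1}$ is known to hold, apply \eqref{E-AoplusBast} to $\bm{M}^{\ast}$; the substitutions $\bm{A}\mapsto\alpha^{-1}\bm{A}$, $\bm{B}\mapsto\beta^{-1}\bm{B}$ pull out a factor $\alpha^{-m}\beta^{-k}$ from each mixed summand with $i_{0}+\cdots+i_{k}=m$, so the mixed terms collapse into $\alpha^{-m}\beta^{-k}\bm{F}_{km}$. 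The pure-$\bm{A}$ contributions decouple after $k$-th roots into $\alpha\geq\lambda$ and $\alpha\geq\gamma$; the pure-$\bm{B}$ contributions give $\beta\geq\mu$ and $\beta\geq\delta$; and the mixed contributions, after $m$-th roots, give $\alpha\geq\beta^{-k/m}\mathop\mathrm{tr}\nolimits^{1/m}(\bm{F}_{km})$ for $k=1,\ldots,n-1$ and $m=1,\ldots,n-k$, together with $\alpha\geq\beta^{-k/m}(\bm{h}^{-}\bm{F}_{km}\bm{g})^{1/m}$ for $k=1,\ldots,n-2$ and $m=1,\ldots,n-k-1$. Taking the $\oplus$ of all mixed lower bounds yields precisely $\alpha\geq H(\beta)$, which by the $G$-$H$ reciprocity established at the end of Section~\ref{S-PADR} is equivalent to $\beta\geq G(\alpha)$. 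The feasibility region of $(\alpha,\beta)$ is thus $\{(\alpha,\beta):\alpha\geq\lambda\oplus\gamma,\ \beta\geq\mu\oplus\delta,\ \beta\geq G(\alpha)\}$.

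The Pareto front is then read off by a short case analysis, using that $G$ is monotonically decreasing. When $G(\lambda\oplus\gamma)\leq\mu\oplus\delta$, equivalently $H(\mu\oplus\delta)\leq\lambda\oplus\gamma$ by reciprocity, the bound $\beta\geq G(\alpha)$ is already implied by $\beta\geq\mu\oplus\delta$ at $\alpha=\lambda\oplus\gamma$, so both componentwise minima are achieved simultaneously and the front reduces to the single point $(\lambda\oplus\gamma,\mu\oplus\delta)$, giving the first statement. Otherwise the Pareto-minimal $\beta$ at each admissible $\alpha$ equals $G(\alpha)$, which stays at least $\mu\oplus\delta$ precisely for $\alpha\leq H(\mu\oplus\delta)$; combining this with $\alpha\geq\lambda\oplus\gamma$ yields the segment of the second statement. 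Substituting either description of the front back into the Lemma~\ref{L-Axcleqxleqd} parametrization completes the third statement.

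The main obstacle I anticipate is the bookkeeping required in the middle paragraph: one must match the different summation ranges of identities \eqref{E-TrAoplusB} and \eqref{E-AoplusBast} with the two pieces defining $G$ and $H$, and verify that the $G$-$H$ reciprocity proved in Section~\ref{S-PADR} for a single tropical polynomial extends term by term to the $\oplus$-sum appearing here --- which follows because an $\oplus$-bound $G(s)\leq t$ is the conjunction of scalar bounds on each summand, but this extension should be invoked explicitly rather than left implicit.
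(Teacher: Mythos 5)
Your proposal is correct and follows essentially the same route as the paper: introduce $\alpha,\beta$ as bounds on the objectives, collapse the constraints into the double inequality handled by Lemma~\ref{L-Axcleqxleqd}, expand the two feasibility conditions via the binomial identities \eqref{E-TrAoplusB} and \eqref{E-AoplusBast} to obtain the feasible region $\alpha\geq\lambda\oplus\gamma$, $\beta\geq\mu\oplus\delta\oplus G(\alpha)$, and read off the front by the monotonicity of $G$ and the $G$--$H$ reciprocity. The only (immaterial) variation is that you solve the mixed terms for $\alpha$ to get $\alpha\geq H(\beta)$ and then invert, where the paper solves them for $\beta$ to get $\beta\geq G(\alpha)$ directly; your closing remark about extending the reciprocity to the combined $\oplus$-sum is handled by the paper's Section~\ref{S-PADR} treatment with general coefficients $a_{km}$.
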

\begin{proof}
We begin with the introduction of auxiliary scalar variables $\alpha$ and $\beta$ to rewrite problem \eqref{P-minxAxxBx-gleqxleqh} as the constrained bi-objective problem
\begin{equation}
\begin{aligned}
\min_{\bm{x},\alpha,\beta}
&&&
(\alpha,\beta);
\\
\text{s.t.}
&&&
\bm{x}^{-}\bm{A}\bm{x}
\leq
\alpha,
\quad
\bm{x}^{-}\bm{B}\bm{x}
\leq
\beta,
\quad
\bm{g}
\leq
\bm{x}
\leq
\bm{h}.
\end{aligned}
\label{P-minalphabeta-xAxleqalpha-xBxleqbeta-gleqxleqh}
\end{equation}

We solve the problem for the variables $\alpha$ and $\beta$ in the sense of Pareto optimality to determine the Pareto front in the objective space of $\alpha$ and $\beta$. Let us consider the system of inequality constraints in problem \eqref{P-minalphabeta-xAxleqalpha-xBxleqbeta-gleqxleqh}
\begin{equation*}
\bm{x}^{-}\bm{A}\bm{x}
\leq
\alpha,
\qquad
\bm{x}^{-}\bm{B}\bm{x}
\leq
\beta,
\qquad
\bm{g}
\leq
\bm{x}
\leq
\bm{h}.
\end{equation*}

Since the matrices $\bm{A}$ and $\bm{B}$ are nonzero, and the vector $\bm{x}$ is regular, we see that $\alpha\geq\bm{x}^{-}\bm{A}\bm{x}>\mathbb{0}$ and $\beta\geq\bm{x}^{-}\bm{B}\bm{x}>\mathbb{0}$.

We apply Lemma~\ref{L-Axleqd} to solve the first inequality in the system with respect to $\bm{A}\bm{x}$ and the second with respect to $\bm{B}\bm{x}$, and then rearrange the system as
\begin{equation*}
\alpha^{-1}
\bm{A}\bm{x}
\leq
\bm{x},
\qquad
\beta^{-1}
\bm{B}\bm{x}
\leq
\bm{x},
\qquad
\bm{g}
\leq
\bm{x}
\leq
\bm{h}.
\end{equation*}

After combining all inequalities in the system obtained, problem \eqref{P-minalphabeta-xAxleqalpha-xBxleqbeta-gleqxleqh} turns into
\begin{equation}
\begin{aligned}
\min_{\bm{x},\alpha,\beta}
&&&
(\alpha,\beta);
\\
\text{s.t.}
&&&
(\alpha^{-1}\bm{A}\oplus\beta^{-1}\bm{B})\bm{x}
\oplus
\bm{g}
\leq
\bm{x}
\leq
\bm{h}.
\end{aligned}
\label{P-minalphabeta-alpha1Aoplusbeta1Boplusgleqxleqh}
\end{equation}

We now examine the double inequality constraint in \eqref{P-minalphabeta-alpha1Aoplusbeta1Boplusgleqxleqh} by applying Lemma~\ref{L-Axcleqxleqd}. We use the assumption and condition of the lemma to derive the Pareto front as a set of points $(\alpha,\beta)$, and then take the corresponding set of solution vectors $\bm{x}$ offered by the lemma as the Pareto-optimal solution of problem \eqref{P-minxAxxBx-gleqxleqh}.

First, observe that under the assumption and condition of the lemma, all solutions of the double inequality are given by
\begin{equation*}
\bm{x}
=
(\alpha^{-1}\bm{A}
\oplus
\beta^{-1}\bm{B})^{\ast}
\bm{u},
\qquad
\bm{g}
\leq
\bm{u}
\leq
(\bm{h}^{-}
(\alpha^{-1}\bm{A}
\oplus
\beta^{-1}\bm{B})^{\ast})^{-}.
\end{equation*}

We begin with the assumption of the lemma, which takes the form of the inequality
\begin{equation*}
\mathop\mathrm{Tr}(\alpha^{-1}\bm{A}\oplus\beta^{-1}\bm{B})
\leq
\mathbb{1}.
\end{equation*}

We rearrange the left-hand side by using \eqref{E-TrAoplusB} with $\bm{A}$ replaced by $\alpha^{-1}\bm{A}$ and $\bm{B}$ by $\beta^{-1}\bm{B}$, and then apply the linearity of trace to write
\begin{equation*}
\bigoplus_{k=1}^{n}
\alpha^{-k}
\mathop\mathrm{tr}\bm{A}^{k}
\oplus
\bigoplus_{k=1}^{n-1}
\bigoplus_{m=1}^{n-k}
\alpha^{-m}\beta^{-k}
\mathop\mathrm{tr}\bm{F}_{km}
\oplus
\bigoplus_{k=1}^{n}
\beta^{-k}
\mathop\mathrm{tr}\bm{B}^{k}
\leq
\mathbb{1},
\end{equation*}
where we use the notation
\begin{equation*}
\bm{F}_{km}
=
\bigoplus_{\substack{i_{0}+i_{1}+\cdots+i_{k}=m\\i_{0},i_{1},\ldots,i_{k}\geq0}}
\bm{A}^{i_{0}}(\bm{B}\bm{A}^{i_{1}}\cdots\bm{B}\bm{A}^{i_{k}}).
\end{equation*}

It follows from properties of idempotent addition that the inequality can be separated into three inequalities, which yields the system
\begin{gather*}
\bigoplus_{k=1}^{n}
\alpha^{-k}
\mathop\mathrm{tr}\bm{A}^{k}
\leq
\mathbb{1},
\qquad
\bigoplus_{k=1}^{n}
\beta^{-k}
\mathop\mathrm{tr}\bm{B}^{k}
\leq
\mathbb{1},
\\
\bigoplus_{k=1}^{n-1}
\bigoplus_{m=1}^{n-k}
\alpha^{-m}\beta^{-k}
\mathop\mathrm{tr}\bm{F}_{km}
\leq
\mathbb{1}.
\end{gather*}

The system obtained is equivalent to the system of inequalities
\begin{gather*}
\alpha^{-k}
\mathop\mathrm{tr}\bm{A}^{k}
\leq
\mathbb{1},
\qquad
\beta^{-k}
\mathop\mathrm{tr}\bm{B}^{k}
\leq
\mathbb{1},
\qquad
k=1,\ldots,n;
\\
\bigoplus_{m=1}^{n-k}
\alpha^{-m}\beta^{-k}
\mathop\mathrm{tr}\bm{F}_{km}
\leq
\mathbb{1},
\qquad
k=1,\ldots,n-1.
\end{gather*}

By solving the first inequality for $\alpha$ and the other two for $\beta$, we have the inequalities
\begin{gather*}
\alpha
\geq
\mathop\mathrm{tr}\nolimits^{1/k}(\bm{A}^{k}),
\qquad
\beta
\geq
\mathop\mathrm{tr}\nolimits^{1/k}(\bm{B}^{k}),
\qquad
k=1,\ldots,n;
\\
\beta
\geq
\bigoplus_{m=1}^{n-k}
\alpha^{-m/k}
\mathop\mathrm{tr}\nolimits^{1/k}(\bm{F}_{km}),
\qquad
k=1,\ldots,n-1.
\end{gather*}

With properties of idempotent addition, we combine these inequalities into the system
\begin{equation}
\begin{gathered}
\alpha
\geq
\bigoplus_{k=1}^{n}
\mathop\mathrm{tr}\nolimits^{1/k}(\bm{A}^{k}),
\qquad
\beta
\geq
\bigoplus_{k=1}^{n}
\mathop\mathrm{tr}\nolimits^{1/k}(\bm{B}^{k}),
\\
\beta
\geq
\bigoplus_{k=1}^{n-1}
\bigoplus_{m=1}^{n-k}
\alpha^{-m/k}
\mathop\mathrm{tr}\nolimits^{1/k}(\bm{F}_{km}).
\end{gathered}
\label{I-alphageqtr1kAk}
\end{equation}

Furthermore, we examine the existence condition of Lemma~\ref{L-Axcleqxleqd}, which is represented in terms of the problem under consideration as follows:
\begin{equation*}
\bm{h}^{-}(\alpha^{-1}\bm{A}\oplus\beta^{-1}\bm{B})^{\ast}\bm{g}
\leq
\mathbb{1}.
\end{equation*}

Application of the expansion formula at \eqref{E-AoplusBast} yields
\begin{equation*}
\bigoplus_{k=0}^{n-1}
\alpha^{-k}
\bm{h}^{-}
\bm{A}^{k}
\bm{g}
\oplus
\bigoplus_{k=1}^{n-2}
\bigoplus_{m=1}^{n-k-1}
\alpha^{-m}\beta^{-k}
\bm{h}^{-}
\bm{F}_{km}
\bm{g}
\oplus
\bigoplus_{k=0}^{n-1}
\beta^{-k}
\bm{h}^{-}
\bm{B}^{k}
\bm{g}
\leq
\mathbb{1}.
\end{equation*}

Taking into account that $\bm{g}\leq\bm{h}$ by assumption and hence $\bm{h}^{-}\bm{g}\leq\mathbb{1}$, we replace the inequality by the system of inequalities
\begin{gather*}
\bigoplus_{k=1}^{n-1}
\alpha^{-k}
\bm{h}^{-}
\bm{A}^{k}
\bm{g}
\leq
\mathbb{1},
\qquad
\bigoplus_{k=1}^{n-1}
\beta^{-k}
\bm{h}^{-}
\bm{B}^{k}
\bm{g}
\leq
\mathbb{1},
\\
\bigoplus_{k=1}^{n-2}
\bigoplus_{m=1}^{n-k-1}
\alpha^{-m}\beta^{-k}
\bm{h}^{-}
\bm{F}_{km}
\bm{g}
\leq
\mathbb{1}.
\end{gather*}

Similarly as above, we solve the first inequality for $\alpha$ and the other two for $\beta$ to write
\begin{equation}
\begin{gathered}
\alpha
\geq
\bigoplus_{k=1}^{n-1}
(\bm{h}^{-}
\bm{A}^{k}
\bm{g})^{1/k},
\qquad
\beta
\geq
\bigoplus_{k=1}^{n-1}
(\bm{h}^{-}
\bm{B}^{k}
\bm{g})^{1/k},
\\
\beta
\geq
\bigoplus_{k=1}^{n-2}
\bigoplus_{m=1}^{n-k-1}
\alpha^{-m/k}
(\bm{h}^{-}
\bm{F}_{km}
\bm{g})^{1/k}.
\end{gathered}
\label{I-alphageqhAkg1k}
\end{equation}

By combining all lower bounds for $\alpha$ and $\beta$ at \eqref{I-alphageqtr1kAk} and \eqref{I-alphageqhAkg1k}, we obtain the inequalities
\begin{gather*}
\alpha
\geq
\bigoplus_{k=1}^{n}
\mathop\mathrm{tr}\nolimits^{1/k}(\bm{A}^{k})
\oplus
\bigoplus_{k=1}^{n-1}
(\bm{h}^{-}
\bm{A}^{k}
\bm{g})^{1/k}
=
\lambda\oplus\gamma,
\\
\beta
\geq
\bigoplus_{k=1}^{n}
\mathop\mathrm{tr}\nolimits^{1/k}(\bm{B}^{k})
\oplus
\bigoplus_{k=1}^{n-1}
(\bm{h}^{-}
\bm{B}^{k}
\bm{g})^{1/k}
=
\mu\oplus\delta,
\\
\beta
\geq
\bigoplus_{k=1}^{n-1}
\bigoplus_{m=1}^{n-k}
\alpha^{-m/k}
\mathop\mathrm{tr}\nolimits^{1/k}(\bm{F}_{km})
\oplus
\bigoplus_{k=1}^{n-2}
\bigoplus_{m=1}^{n-k-1}
\alpha^{-m/k}
(\bm{h}^{-}
\bm{F}_{km}
\bm{g})^{1/k}
=
G(\alpha).
\end{gather*}

We couple the last two inequalities to define the feasible domain for the variables $\alpha$ and $\beta$ by the system of inequalities
\begin{equation*}
\alpha
\geq
\lambda\oplus\gamma,
\qquad
\beta
\geq
\mu\oplus\delta
\oplus
G(\alpha).
\end{equation*}

Finally, as a result of the application of Lemma~\ref{L-Axcleqxleqd}, problem \eqref{P-minalphabeta-alpha1Aoplusbeta1Boplusgleqxleqh} decomposes into the bi-objective optimization problem
\begin{equation}
\begin{aligned}
\min_{\alpha,\beta}
&&&
(\alpha,\beta);
\\
\text{s.t.}
&&&
\alpha
\geq
\lambda\oplus\gamma,
\qquad
\beta
\geq
\mu\oplus\delta
\oplus
G(\alpha);
\end{aligned}
\label{P-minalphabeta-alphageqlambdaoplusgamma-betageqmuoplusdeltsoplusGalpha}
\end{equation}
together with a parametric description of solution vectors in the form
\begin{equation*}
\bm{x}
=
(\alpha^{-1}\bm{A}
\oplus
\beta^{-1}\bm{B})^{\ast}
\bm{u},
\qquad
\bm{g}
\leq
\bm{u}
\leq
(\bm{h}^{-}
(\alpha^{-1}\bm{A}
\oplus
\beta^{-1}\bm{B})^{\ast})^{-},
\end{equation*}
where $\alpha$ and $\beta$ are solutions of problem \eqref{P-minalphabeta-alphageqlambdaoplusgamma-betageqmuoplusdeltsoplusGalpha}.

It remains to solve the minimization problem at \eqref{P-minalphabeta-alphageqlambdaoplusgamma-betageqmuoplusdeltsoplusGalpha} in the Pareto-optimal sense. The solution involves the derivation of the Pareto front defined in the feasible domain as the pairs $(\alpha,\beta)$ in which no one variable can be decreased without increasing the other.

First, we observe that the function $G(\alpha)$ monotonically decreases as $\alpha$ increases. As a result, the feasible domain given by the constraints in \eqref{P-minalphabeta-alphageqlambdaoplusgamma-betageqmuoplusdeltsoplusGalpha} is the part of the $\alpha\beta$-plane, which is bounded from the left by the vertical line $\alpha=\lambda\oplus\gamma$, and from below by the horizontal line $\beta=\mu\oplus\delta$ and the curve $\beta=G(\alpha)$ (see Fig.~\ref{F-EPF} for an illustration).

To represent the Pareto front, we first consider the inequality $G(\alpha)<\mu\oplus\delta$, which determines those $\alpha$ where the graph of the function $G(\alpha)$ lies below the line $\beta=\mu\oplus\delta$. With the inverse function of $G$, denoted by $H$, the solution of the inequality for $\alpha$ takes the form $H(\mu\oplus\delta)<\alpha$.

Suppose that the condition $\lambda\oplus\gamma\geq H(\mu\oplus\delta)$ is satisfied. Then, for all $\alpha>\lambda\oplus\gamma$, we have $\alpha>H(\mu\oplus\delta)$, which means that for all feasible $\alpha$, the inequality $G(\alpha)<\mu\oplus\delta$ is valid. As a result, the Pareto front reduces to a single point with
\begin{equation*}
\alpha
=
\lambda\oplus\gamma,
\qquad
\beta
=
\mu\oplus\delta.
\end{equation*}

If the condition $\lambda\oplus\gamma<H(\mu\oplus\delta)$ holds, then $G(\alpha)\geq\mu\oplus\delta$ for all $\alpha$ such that $\lambda\oplus\gamma\leq\alpha\leq H(\mu\oplus\delta)$. In this case, the Pareto front takes the form of a segment, where
\begin{equation*}
\lambda\oplus\gamma
\leq
\alpha
\leq
H(\mu\oplus\delta),
\qquad
\beta
=
G(\alpha).
\end{equation*}

The derivation of the Pareto front completes the proof.
\qed
\end{proof}

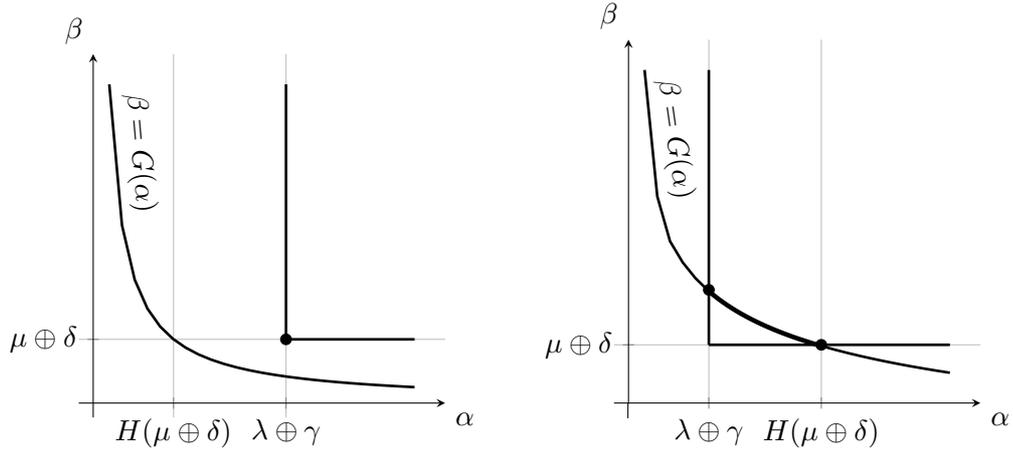
\begin{figure}[ht]
\pgfplotsset{
    standard/.style={
        axis x line=middle,
        axis y line=middle,
        enlarge x limits=0.1,
        enlarge y limits=0.1,
        every axis x label/.style={at={(current axis.right of origin)},anchor=north west},
        every axis y label/.style={at={(current axis.above origin)},anchor=south east}				
    }
}
\begin{tikzpicture}
\begin{axis}[
standard,
grid=major,
width=64mm,
height=64mm,
xlabel=$\alpha$,
ylabel=$\beta$,
xtick={0,5,12},
xticklabels={0,$H(\mu\oplus\delta)$,$\lambda\oplus\gamma$},
ytick={0,0.2},
yticklabels={0,$\mu\oplus\delta$},
]

\addplot[
black,
line width=1.0pt,
domain=1:20,
y domain=1:10,
]
{max(1/x^(3/2),1/x,1/x^(1/2)/50}
;

\addplot[
black,
line width=1.0pt,
]
coordinates {(12,0.2) (12,1.0)};

\addplot[
black,
line width=1.0pt,
]
coordinates {(12,0.2) (20,0.2)};

\addplot[only marks,
mark options={black},
]
coordinates {(12,0.2)};
\end{axis}

\node [rotate=-85] at (0.8,3.5) {$\beta=G(\alpha)$};


\end{tikzpicture}
\hspace{5mm}
\begin{tikzpicture}
\begin{axis}[
standard,
grid=major,
width=64mm,
height=64mm,
xlabel=$\alpha$,
ylabel=$\beta$,
xtick={0,5,12},
xticklabels={0,$\lambda\oplus\gamma$,$H(\mu\oplus\delta)$},
ytick={0,0.45},
yticklabels={0,$\mu\oplus\delta$},
]

\addplot[
black,
line width=1.0pt,
domain=1:20,
y domain=1:10,
]
{max(1/x^(1/2),1/x^(1/4)/1.2}
;

\addplot[
black,
line width=1.75pt,
domain=5:12,
y domain=1:10,
]
{max(1/x^(1/2),1/x^(1/4)/1.2};

\addplot[
black,
line width=1.0pt,
]
coordinates {(5,0.45) (5,1.0)};

\addplot[
black,
line width=1.0pt,
]
coordinates {(5,0.45) (20,0.45)};

\addplot[only marks,
mark options={black},
]
coordinates {(5,0.56) (12,0.45)};
\end{axis}

\draw (0.185,0) -- (0.185,-0.2);

\node [rotate=-82] at (0.8,3.5) {$\beta=G(\alpha)$};


\end{tikzpicture}
\caption{Examples of Pareto front given by a single point (left) and a segment (right).}
\label{F-EPF}
\end{figure}

\subsection{Solution of two-dimensional problems}

To illustrate application of Theorem~\ref{T-minxAxxBx-gleqxleqh}, we first derive a Pareto-optimal solution of a two-dimensional problem with arbitrary matrices, and then exploit this solution to handle a problem with symmetrically reciprocal matrices.
\begin{example}
\label{X-Aeqa11a12a21a22}
Consider problem \eqref{P-minxAxxBx-gleqxleqh} with $n=2$, where the matrices and vectors are given by
\begin{equation*}
\bm{A}
=
\begin{pmatrix}
a_{11} & a_{12}
\\
a_{21} & a_{22}
\end{pmatrix},
\qquad
\bm{B}
=
\begin{pmatrix}
b_{11} & b_{12}
\\
b_{21} & b_{22}
\end{pmatrix},
\qquad
\bm{g}
=
\begin{pmatrix}
g_{1}
\\
g_{2}
\end{pmatrix},
\qquad
\bm{h}
=
\begin{pmatrix}
h_{1}
\\
h_{2}
\end{pmatrix}.
\end{equation*}

To find all regular vectors $\bm{x}=(x_{1},x_{2})^{T}$ that solve the problem, we find the spectral radii of the matrices $\bm{A}$ and $\bm{B}$. Specifically, after calculating the matrix
\begin{equation*}
\bm{A}^{2}
=
\begin{pmatrix}
a_{11}^{2}\oplus a_{12}a_{21} & a_{12}(a_{11}\oplus a_{22})
\\
a_{21}(a_{11}\oplus a_{22}) & a_{12}a_{21}\oplus a_{22}^{2}
\end{pmatrix},
\end{equation*}
the spectral radius of $\bm{A}$ is found as
\begin{equation*}
\lambda
=
\mathop\mathrm{tr}\bm{A}
\oplus
\mathop\mathrm{tr}\nolimits^{1/2}(\bm{A}^{2})
=
a_{11}\oplus a_{22}\oplus a_{12}^{1/2}a_{21}^{1/2}.
\end{equation*}

In a similar way, we obtain
\begin{equation*}
\mu
=
\mathop\mathrm{tr}\bm{B}
\oplus
\mathop\mathrm{tr}\nolimits^{1/2}(\bm{B}^{2})
=
b_{11}\oplus b_{22}\oplus b_{12}^{1/2}b_{21}^{1/2}.
\end{equation*}

Next, we calculate the scalars
\begin{gather*}
\gamma
=
\bm{h}^{-}\bm{A}\bm{g}
=
h_{1}^{-1}a_{11}g_{1}
\oplus
h_{1}^{-1}a_{12}g_{2}
\oplus
h_{2}^{-1}a_{21}g_{1}
\oplus
h_{2}^{-1}a_{22}g_{2},
\\
\delta
=
\bm{h}^{-}\bm{B}\bm{g}
=
h_{1}^{-1}b_{11}g_{1}
\oplus
h_{1}^{-1}b_{12}g_{2}
\oplus
h_{2}^{-1}b_{21}g_{1}
\oplus
h_{2}^{-1}b_{22}g_{2}.
\end{gather*}

To derive the functions $G$ and $H$, we first evaluate
\begin{equation*}
\bm{F}_{11}
=
\bm{A}\bm{B}
\oplus
\bm{B}\bm{A},
\qquad
\mathop\mathrm{tr}\bm{F}_{11}
=
\mathop\mathrm{tr}(\bm{A}\bm{B}),
\end{equation*}
where
\begin{equation*}
\mathop\mathrm{tr}(\bm{A}\bm{B})
=
a_{11}b_{11}\oplus a_{12}b_{21}
\oplus
a_{21}b_{12}\oplus a_{22}b_{22}.
\end{equation*}

With $n=2$, the functions reduce to
\begin{equation*}
G(s)
=
s^{-1}\mathop\mathrm{tr}(\bm{A}\bm{B}),
\qquad
H(t)
=
t^{-1}\mathop\mathrm{tr}(\bm{A}\bm{B}).
\end{equation*}

Finally, we consider the Kleene star matrix, which generates the solutions of the problem and takes the form
\begin{equation*}
(\alpha^{-1}\bm{A}\oplus\beta^{-1}\bm{B})^{\ast}
=
\bm{I}
\oplus
\alpha^{-1}\bm{A}\oplus\beta^{-1}\bm{B}.
\end{equation*}

Since $\alpha\geq\lambda\oplus\gamma\geq a_{ii}$ and $\beta\geq\mu\oplus\delta\geq b_{ii}$, we have $\alpha^{-1}a_{ii}\leq\mathbb{1}$ and $\beta^{-1}b_{ii}\leq\mathbb{1}$ for $i=1,2$. As a result, the Kleene matrix becomes
\begin{equation*}
(\alpha^{-1}\bm{A}\oplus\beta^{-1}\bm{B})^{\ast}
=
\begin{pmatrix}
\mathbb{1} & \alpha^{-1}a_{12}\oplus\beta^{-1}b_{12}
\\
\alpha^{-1}a_{21}\oplus\beta^{-1}b_{21} & \mathbb{1}
\end{pmatrix}.
\end{equation*}

We now describe all Pareto-optimal solutions of the problem according to Theorem~\ref{T-minxAxxBx-gleqxleqh}. Suppose that the condition $\mathop\mathrm{tr}(\bm{A}\bm{B})<(\lambda\oplus\gamma)(\mu\oplus\delta)$ holds. In this case, the Pareto front of the problem contracts to the point $(\alpha,\beta)$ with
\begin{equation*}
\alpha
=
\lambda\oplus\gamma,
\qquad
\beta
=
\mu\oplus\delta.
\end{equation*}

If $\mathop\mathrm{tr}(\bm{A}\bm{B})\geq(\lambda\oplus\gamma)(\mu\oplus\delta)$, then the Pareto front is the segment given by
\begin{equation*}
\lambda\oplus\gamma
\leq
\alpha
\leq
(\mu\oplus\delta)^{-1}\mathop\mathrm{tr}(\bm{A}\bm{B}),
\qquad
\beta
=
\alpha^{-1}\mathop\mathrm{tr}(\bm{A}\bm{B}).
\end{equation*}

The Pareto-optimal solutions are represented in parametric form as
\begin{equation*}
\bm{x}
=
\begin{pmatrix}
\mathbb{1} & \alpha^{-1}a_{12}\oplus\beta^{-1}b_{12}
\\
\alpha^{-1}a_{21}\oplus\beta^{-1}b_{21} & \mathbb{1}
\end{pmatrix}\bm{u},
\end{equation*}
where the vector of parameters $\bm{u}=(u_{1},u_{2})^{T}$ satisfies the conditions
\begin{equation*}
\begin{pmatrix}
g_{1}
\\
g_{2}
\end{pmatrix}
\leq
\bm{u}
\leq
\begin{pmatrix}
(h_{1}^{-1}\oplus h_{2}^{-1}(\alpha^{-1}a_{21}\oplus\beta^{-1}b_{21}))^{-1}
\\
(h_{1}^{-1}(\alpha^{-1}a_{12}\oplus\beta^{-1}b_{12})\oplus h_{2}^{-1})^{-1}
\end{pmatrix}.
\end{equation*}
\end{example}

\begin{example}
Consider the previous example in the framework of the max-algebra $\mathbb{R}_{\max}$, where the matrices $\bm{A}$ and $\bm{B}$ are symmetrically reciprocal. Let us define
\begin{equation*}
\bm{A}
=
\begin{pmatrix}
1 & 2
\\
1/2 & 1
\end{pmatrix},
\qquad
\bm{B}
=
\begin{pmatrix}
1 & 1/3
\\
3 & 1
\end{pmatrix},
\qquad
\bm{g}
=
\begin{pmatrix}
1/3
\\
1/3
\end{pmatrix},
\qquad
\bm{h}
=
\begin{pmatrix}
1/2
\\
1/2
\end{pmatrix}.
\end{equation*}

First, we observe that the assumptions of Theorem~\ref{T-minxAxxBx-gleqxleqh} are fulfilled. We calculate the matrices and vectors
\begin{equation*}
\bm{A}^{2}
=
\bm{A},
\qquad
\bm{B}^{2}
=
\bm{B},
\qquad
\bm{A}\bm{B}
=
\begin{pmatrix}
6 & 2
\\
3 & 1
\end{pmatrix},
\qquad
\bm{A}\bm{g}
=
\begin{pmatrix}
2/3
\\
1/3
\end{pmatrix},
\qquad
\bm{B}\bm{g}
=
\begin{pmatrix}
1
\\
3/2
\end{pmatrix},
\end{equation*}
and then evaluate the traces
\begin{equation*}
\mathop\mathrm{tr}\bm{A}
=
\mathop\mathrm{tr}\bm{A}^{2}
=
1,
\qquad
\mathop\mathrm{tr}\bm{B}
=
\mathop\mathrm{tr}\bm{B}^{2}
=
1,
\qquad
\mathop\mathrm{tr}(\bm{A}\bm{B})
=
6.
\end{equation*}

With the above results, we obtain
\begin{equation*}
\lambda
=
1,
\qquad
\mu
=
1,
\qquad
\gamma
=
4/3,
\qquad
\delta
=
2,
\qquad
\lambda\oplus\gamma
=
4/3,
\qquad
\mu\oplus\delta
=
2.
\end{equation*}

Since the condition $\mathop\mathrm{tr}(\bm{A}\bm{B})=6>(\lambda\oplus\gamma)(\mu\oplus\delta)=8/3$ holds, the Pareto front takes the form of the segment
\begin{equation*}
4/3
\leq
\alpha
\leq
3,
\qquad
\beta
=
6\alpha^{-1},
\end{equation*}
whereas all Pareto-optimal solutions are given by
\begin{equation*}
\bm{x}
=
\begin{pmatrix}
1 & 2\alpha^{-1}\oplus3^{-1}\beta^{-1}
\\
2^{-1}\alpha^{-1}\oplus3\beta^{-1} & 1
\end{pmatrix}\bm{u},
\qquad
\bm{u}>\bm{0}.
\end{equation*}

Let us verify that $2\alpha^{-1}\oplus3^{-1}\beta^{-1}=2\alpha^{-1}$. Indeed, after substitution of $\beta=6\alpha^{-1}$, we can write $3^{-1}\beta^{-1}=18^{-1}\alpha\leq6^{-1}<2/3\leq2\alpha^{-1}$. In a similar way, we find that $2^{-1}\alpha^{-1}\oplus3\beta^{-1}=3\beta^{-1}=2^{-1}\alpha$. As a result, the Kleene star matrix reduces to
\begin{equation*}
\begin{pmatrix}
1 & 2\alpha^{-1}
\\
2^{-1}\alpha & 1
\end{pmatrix}.
\end{equation*}

Since both columns in this matrix are collinear, one of them can be eliminated to represent all solution as
\begin{equation*}
\bm{x}
=
\begin{pmatrix}
1
\\
2^{-1}\alpha
\end{pmatrix}u,
\qquad
u>0,
\qquad
4/3
\leq
\alpha
\leq
3.
\end{equation*}

An illustration is given in Fig.~\ref{F-PFPOS}, where the Pareto front appears as a thick segment (left), whereas the Pareto-optimal solutions $\bm{x}$ form a cone spanned by the vectors $(1,2/3)^{T}$ and $(1,3/2)^{T}$, which correspond to the limiting points of the front (right).
\begin{figure}[ht]
\pgfplotsset{
    standard/.style={
        axis x line=middle,
        axis y line=middle,
        enlarge x limits=0.1,
        enlarge y limits=0.1,
        every axis x label/.style={at={(current axis.right of origin)},anchor=north west},
        every axis y label/.style={at={(current axis.above origin)},anchor=south east}				
    }
}
\begin{tikzpicture}
\begin{axis}[
standard,
grid=major,
width=64mm,
height=64mm,
xlabel=$\alpha$,
ylabel=$\beta$,
xtick={0,1.333,3},
xticklabels={0,$4/3$,$3$},
ytick={0,2},
yticklabels={0,$2$},
]

\addplot[
black,
line width=1.0pt,
domain=1:4,
y domain=1:3,
]
{6/x}
;

\addplot[
black,
line width=1.75pt,
domain=1.333:3,
y domain=1:3,
]
{6/x};



\addplot[only marks,
mark options={black},
]
coordinates {(1.333,4.5) (3,2)};
\end{axis}

\draw (0,0) -- (0,-0.2);
\draw (0,0) -- (-0.2,0);

\node [rotate=-45] at (2.1,2.0) {$\beta=6\alpha^{-1}$};

\end{tikzpicture}
\hspace{5mm}
\begin{tikzpicture}

\draw [->] (0,0.2) -- (5.0,0.2);
\draw [->] (0.2,0) -- (0.2,5.0);

\node at (-0.1,5.3) {$x_{2}$};
\node at (5.3,0.0) {$x_{1}$};

\draw (2,0.1) -- (2,3);
\node at (2,-0.15) {$1$};

\draw (0.1,1.333) -- (2,1.333);
\node at (-0.25,1.333) {$2/3$};

\draw (0.1,3) -- (2,3);
\node at (-0.25,3) {$3/2$};

\draw [thick] (0.2,0.2) -- (2.5,3.75);
\draw [ultra thick,->] (0.2,0.2) -- (2.0,3.0);

\draw [thick] (0.2,0.2) -- (4.0,2.6);
\draw [ultra thick,->] (0.2,0.2) -- (2.0,1.35);

\draw [ultra thick,->] (0.2,0.2) -- (3.0,3.0);

\node at (3.3,3.1) {$\bm{x}$};
\end{tikzpicture}
\caption{Pareto front (left) and Pareto-optimal solution (right).}
\label{F-PFPOS}
\end{figure}
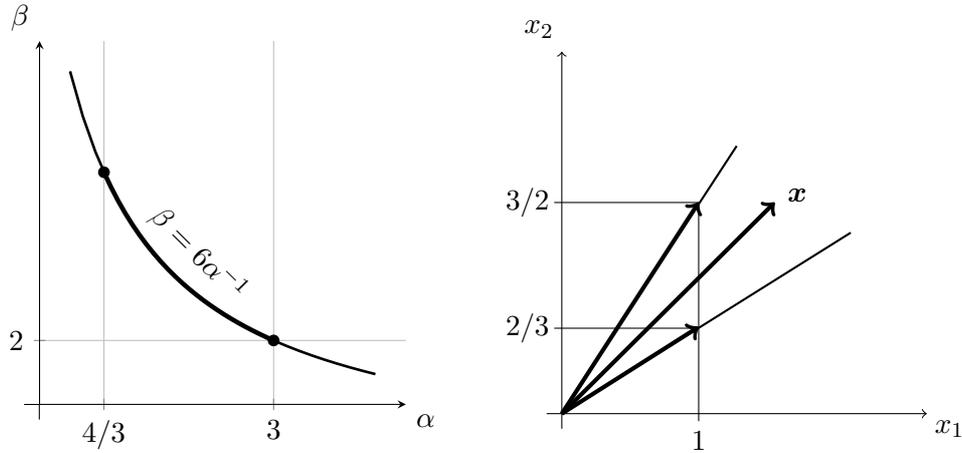
\end{example}

\section{Application to bi-criteria decision problem}
\label{S-ABCBCDP}

We are now in a position to solve the box-constrained bi-criteria decision problem at \eqref{P-min_maxijaijxixj-maxijbijxixj-gileqxilehi}. In the framework of the max-algebra $\mathbb{R}_{\max}$, the problem becomes
\begin{equation*}
\begin{aligned}
\min_{x_{1},\ldots,x_{n}}
&&&
\left(
\bigoplus_{1\leq i,j\leq n}x_{i}^{-1}a_{ij}x_{j},
\bigoplus_{1\leq i,j\leq n}x_{i}^{-1}b_{ij}x_{j}
\right);
\\
\text{s.t.}
&&&
g_{j}
\leq
x_{j}
\leq
h_{j},
\qquad
j=1,\ldots,n.
\end{aligned}
\end{equation*}

Furthermore, we introduce the matrix-vector notation 
\begin{equation*}
\bm{A}
=
(a_{ij}),
\qquad
\bm{B}
=
(b_{ij}),
\qquad
\bm{g}
=
(g_{j}),
\qquad
\bm{h}
=
(h_{j}),
\qquad
\bm{x}
=
(x_{j})
\end{equation*}
to represent the vector objective function as
\begin{equation*}
(\bm{x}^{-}\bm{A}\bm{x},\ \bm{x}^{-}\bm{B}\bm{x}),
\end{equation*}
and the inequality constraints as
\begin{equation*}
\bm{g}
\leq
\bm{x}
\leq
\bm{h}.
\end{equation*}

We combine the objective function and inequality constraint, which yields a constrained bi-objective optimization problem in the form of \eqref{P-minxAxxBx-gleqxleqh}. We observe that in the context of decision making, the matrices $\bm{A}$ and $\bm{B}$ are indeed nonzero and the vector $\bm{h}$ is regular. In this case, the solution of the problem is given by Theorem~\ref{T-minxAxxBx-gleqxleqh}, which is applied in the max-algebra setting.

As an illustration of the computational technique used in the solution, we present a numerical example of the solution of a problem with four alternatives. To see how the constraints may affect the solution, we take as the starting point the unconstrained problem examined in \cite{Krivulin2020Using}, and then add box constraints on the solution vector to form a new constrained problem.

\begin{example}
Suppose one needs to evaluate the absolute ratings of $n=4$ alternatives that are compared in pairs with respect to two criteria, subject to box constraints imposed on the ratings. The matrices of pairwise comparisons and bound vectors of constraints are given by
\begin{gather*}
\bm{A}
=
\begin{pmatrix}
1 & 3 & 4 & 2
\\
1/3 & 1 & 1/2 & 1/3
\\
1/4 & 2 & 1 & 4
\\
1/2 & 3 & 1/4 & 1
\end{pmatrix},
\qquad
\bm{B}
=
\begin{pmatrix}
1 & 2 & 4 & 2
\\
1/2 & 1 & 1/3 & 1/2
\\
1/4 & 3 & 1 & 4
\\
1/2 & 2 & 1/4 & 1
\end{pmatrix},
\\
\bm{g}
=
\begin{pmatrix}
1
\\
0
\\
0
\\
0
\end{pmatrix},
\quad
\bm{h}
=
\begin{pmatrix}
1
\\
1/6
\\
1
\\
1
\end{pmatrix}.
\end{gather*}

A complete solution to the problem when the box constraint $\bm{g}\leq\bm{x}\leq\bm{h}$ is not taken into account is given in \cite{Krivulin2020Using}, where the Pareto front in the $\alpha\beta$ plane forms the segment
\begin{equation*}
2
\leq
\alpha
\leq
3,
\qquad
\beta
=
24\alpha^{-3}\oplus24^{1/3}\alpha^{-1/3},
\end{equation*}
and the set of Pareto-optimal solutions $\bm{x}$ is described using a vector of parameters $\bm{u}$ as
\begin{equation*}
\bm{x}
=
(\alpha^{-1}\bm{A}\oplus\beta^{-1}\bm{B})^{\ast}
\bm{u},
\qquad
\bm{u}
>
\bm{0}.
\end{equation*}

Specifically, at the limiting points of the front with $\alpha_{1}=2$ and $\alpha_{2}=3$, the solution reduces to the vectors
\begin{equation*}
\bm{x}_{1}
=
\begin{pmatrix}
1
\\
1/6
\\
1/2
\\
1/4
\end{pmatrix}
u,
\qquad
u>0;
\qquad
\bm{x}_{2}
=
\begin{pmatrix}
1
\\
1/4
\\
1/2
\\
1/4
\end{pmatrix}
v,
\qquad
v>0.
\end{equation*}

We now turn to the box-constrained problem in complete form \eqref{P-minxAxxBx-gleqxleqh}. We exploit the results obtained in \cite{Krivulin2020Using} to write the matrices and spectral radius
\begin{gather*}
\bm{A}^{2}
=
\begin{pmatrix}
1 & 8 & 4 & 16
\\
1/3 & 1 & 4/3 & 2
\\
2 & 12 & 1 & 4
\\
1 & 3 & 2 & 1
\end{pmatrix},
\qquad
\bm{A}^{3}
=
\begin{pmatrix}
8 & 48 & 4 & 16
\\
1 & 6 & 4/3 & 16/3
\\
4 & 12 & 8 & 4
\\
1 & 4 & 4 & 8
\end{pmatrix},
\\
\bm{A}^{4}
=
\begin{pmatrix}
16 & 48 & 32 & 16
\\
8/3 & 16 & 4 & 16/3
\\
4 & 16 & 16 & 32
\\
4 & 24 & 4 & 16
\end{pmatrix},
\qquad
\lambda
=
2.
\end{gather*}

Similarly, we have the matrices and spectral radius
\begin{gather*}
\bm{B}^{2}
=
\begin{pmatrix}
1 & 12 & 4 & 16
\\
1/2 & 1 & 2 & 4/3
\\
2 & 8 & 1 & 4
\\
1 & 2 & 2 & 1
\end{pmatrix},
\qquad
\bm{B}^{3}
=
\begin{pmatrix}
8 & 32 & 4 & 16
\\
2/3 & 6 & 2 & 8
\\
4 & 8 & 8 & 4
\\
1 & 6 & 4 & 8
\end{pmatrix},
\\
\bm{B}^{4}
=
\begin{pmatrix}
16 & 32 & 32 & 16
\\
4 & 16 & 8/3 & 8
\\
4 & 24 & 16 & 32
\\
4 & 16 & 4 & 16
\end{pmatrix},
\qquad
\mu
=
2.
\end{gather*}

Next, we calculate the vectors
\begin{equation*}
\bm{A}\bm{g}
=
\begin{pmatrix}
1
\\
1/3
\\
1/4
\\
1/2
\end{pmatrix},
\qquad
\bm{A}^{2}\bm{g}
=
\begin{pmatrix}
1
\\
1/3
\\
2
\\
1
\end{pmatrix},
\qquad
\bm{A}^{3}\bm{g}
=
\begin{pmatrix}
8
\\
1
\\
4
\\
1
\end{pmatrix},
\end{equation*}
and then find
\begin{equation*}
\bm{h}^{-}\bm{A}\bm{g}
=
2,
\qquad
(\bm{h}^{-}\bm{A}^{2}\bm{g})^{1/2}
=
2^{1/2},
\qquad
(\bm{h}^{-}\bm{A}^{3}\bm{g})^{1/3}
=
2.
\end{equation*}

Combining these results yields
\begin{equation*}
\gamma
=
\bm{h}^{-}\bm{A}\bm{g}
\oplus
(\bm{h}^{-}\bm{A}^{2}\bm{g})^{1/2}
\oplus
(\bm{h}^{-}\bm{A}^{3}\bm{g})^{1/3}
=
2.
\end{equation*}

A similar computation gives
\begin{equation*}
\delta
=
\bm{h}^{-}\bm{B}\bm{g}
\oplus
(\bm{h}^{-}\bm{B}^{2}\bm{g})^{1/2}
\oplus
(\bm{h}^{-}\bm{B}^{3}\bm{g})^{1/3}
=
3.
\end{equation*}

We now examine which condition $H(\mu\oplus\delta)\leq\lambda\oplus\gamma$ and $H(\mu\oplus\delta)>\lambda\oplus\gamma$ in Theorem~\ref{T-minxAxxBx-gleqxleqh} is satisfied. First, we write the function
\begin{multline*}
H(t)
=
t^{-1}
\mathop\mathrm{tr}(\bm{F}_{11})
\oplus
t^{-1/2}
\mathop\mathrm{tr}\nolimits^{1/2}(\bm{F}_{12})
\oplus
t^{-1/3}
\mathop\mathrm{tr}\nolimits^{1/3}(\bm{F}_{13})
\\
\oplus
t^{-2}
\mathop\mathrm{tr}(\bm{F}_{21})
\oplus
t^{-1}
\mathop\mathrm{tr}\nolimits^{1/2}(\bm{F}_{22})
\oplus
t^{-3}
\mathop\mathrm{tr}(\bm{F}_{31})
\\
\oplus
t^{-1}
\bm{h}^{-}
\bm{F}_{11}
\bm{g}
\oplus
t^{-1/2}
(\bm{h}^{-}
\bm{F}_{12}
\bm{g})^{1/2}
\oplus
t^{-2}
\bm{h}^{-}
\bm{F}_{21}
\bm{g}.
\end{multline*}

Furthermore, we define the matrices
\begin{gather*}
\bm{F}_{11}
=
\bm{A}\bm{B}
\oplus
\bm{B}\bm{A},
\qquad
\bm{F}_{12}
=
\bm{A}^{2}\bm{B}
\oplus
\bm{A}\bm{B}\bm{A}
\oplus
\bm{B}\bm{A}^{2},
\\
\bm{F}_{13}
=
\bm{A}^{3}\bm{B}
\oplus
\bm{A}^{2}\bm{B}\bm{A}
\oplus
\bm{A}\bm{B}\bm{A}^{2}
\oplus
\bm{B}\bm{A}^{3},
\qquad
\bm{F}_{21}
=
\bm{A}\bm{B}^{2}
\oplus
\bm{B}\bm{A}\bm{B}
\oplus
\bm{B}^{2}\bm{A},
\\
\bm{F}_{22}
=
\bm{A}^{2}\bm{B}^{2}
\oplus
(\bm{A}\bm{B})^{2}
\oplus
\bm{B}\bm{A}^{2}\bm{B}
\oplus
(\bm{B}\bm{A})^{2}
\oplus
\bm{B}^{2}\bm{A}^{2},
\\
\bm{F}_{31}
=
\bm{A}\bm{B}^{3}
\oplus
\bm{B}\bm{A}\bm{B}^{2}
\oplus
\bm{B}^{2}\bm{A}\bm{B}
\oplus
\bm{B}^{3}\bm{A}.
\end{gather*}

Then, we need to calculate the matrix products
\begin{gather*}
\bm{A}\bm{B}
=
\begin{pmatrix}
3/2 & 12 & 4 & 16
\\
1/2 & 3/2 & 4/3 & 2
\\
2 & 8 & 1 & 4
\\
3/2 & 3 & 2 & 3/2
\end{pmatrix},
\qquad
\bm{B}\bm{A}
=
\begin{pmatrix}
1 & 8 & 4 & 16
\\
1/2 & 3/2 & 2 & 4/3
\\
2 & 12 & 3/2 & 4
\\
2/3 & 3 & 2 & 1
\end{pmatrix},
\\
\bm{A}^{2}\bm{B}
=
\begin{pmatrix}
8 & 32 & 4 & 16
\\
1 & 4 & 4/3 & 16/3
\\
6 & 12 & 8 & 6
\\
3/2 & 6 & 4 & 8
\end{pmatrix},
\qquad
\bm{A}\bm{B}\bm{A}
=
\begin{pmatrix}
8 & 48 & 6 & 16
\\
1 & 6 & 2 & 16/3
\\
8/3 & 12 & 8 & 4
\\
3/2 & 9/2 & 6 & 8
\end{pmatrix},
\\
\bm{B}\bm{A}^{2}
=
\begin{pmatrix}
8 & 48 & 4 & 16
\\
2/3 & 4 & 2 & 8
\\
4 & 12 & 8 & 6
\\
1 & 4 & 8/3 & 8
\end{pmatrix},
\qquad
\bm{A}^{3}\bm{B}
=
\begin{pmatrix}
24 & 48 & 32 & 24
\\
3 & 32/3 & 4 & 16/3
\\
6 & 24 & 16 & 32
\\
4 & 16 & 4 & 16
\end{pmatrix},
\\
\bm{A}\bm{B}^{2}
=
\begin{pmatrix}
8 & 32 & 6 & 16
\\
1 & 4 & 2 & 16/3
\\
4 & 8 & 8 & 4
\\
3/2 & 6 & 6 & 8
\end{pmatrix},
\qquad
\bm{B}\bm{A}\bm{B}
=
\begin{pmatrix}
8 & 32 & 4 & 16
\\
3/4 & 6 & 2 & 8
\\
6 & 12 & 8 & 6
\\
3/2 & 6 & 8/3 & 8
\end{pmatrix},
\\
\bm{B}^{2}\bm{A}
=
\begin{pmatrix}
8 & 48 & 6 & 16
\\
2/3 & 4 & 2 & 8
\\
8/3 & 12 & 8 & 4
\\
1 & 4 & 4 & 8
\end{pmatrix},
\qquad
\bm{A}^{2}\bm{B}^{2}
=
\begin{pmatrix}
16 & 32 & 32 & 16
\\
8/3 & 32/3 & 4 & 16/3
\\
6 & 24 & 24 & 32
\\
4 & 16 & 6 & 16
\end{pmatrix},
\\
(\bm{A}\bm{B})^{2}
=
\begin{pmatrix}
24 & 48 & 32 & 24
\\
3 & 32/3 & 4 & 8
\\
6 & 24 & 32/3 & 32
\\
4 & 18 & 6 & 24
\end{pmatrix},
\qquad
\bm{A}\bm{B}^{3}
=
\begin{pmatrix}
16 & 32 & 32 & 24
\\
8/3 & 32/3 & 4 & 8
\\
4 & 24 & 16 & 32
\\
4 & 18 & 6 & 24
\end{pmatrix}.
\end{gather*}

We apply the additive and cyclic properties of trace to find
\begin{gather*}
\mathop\mathrm{tr}\bm{F}_{11}
=
\mathop\mathrm{tr}(\bm{A}\bm{B})
=
3/2,
\qquad
\mathop\mathrm{tr}\bm{F}_{12}
=
\mathop\mathrm{tr}(\bm{A}^{2}\bm{B})
=
8,
\qquad
\mathop\mathrm{tr}\bm{F}_{13}
=
\mathop\mathrm{tr}(\bm{A}^{3}\bm{B})
=
24,
\\
\mathop\mathrm{tr}\bm{F}_{21}
=
\mathop\mathrm{tr}(\bm{A}\bm{B}^{2})
=
8,
\qquad
\mathop\mathrm{tr}\bm{F}_{22}
=
\mathop\mathrm{tr}(\bm{A}^{2}\bm{B}^{2})
\oplus
\mathop\mathrm{tr}(\bm{A}\bm{B})^{2}
=
24,
\\
\mathop\mathrm{tr}\bm{F}_{31}
=
\mathop\mathrm{tr}(\bm{A}\bm{B}^{3})
=
24.
\end{gather*}

Finally, we calculate the matrices
\begin{gather*}
\bm{F}_{11}
=
\begin{pmatrix}
3/2 & 12 & 4 & 16
\\
1/2 & 3/2 & 2 & 2
\\
2 & 12 & 3/2 & 4
\\
3/2 & 3 & 2 & 3/2
\end{pmatrix},
\qquad
\bm{F}_{12}
=
\begin{pmatrix}
8 & 48 & 6 & 16
\\
1 & 6 & 2 & 8
\\
6 & 12 & 8 & 6
\\
3/2 & 6 & 6 & 8
\end{pmatrix},
\\
\bm{F}_{21}
=
\begin{pmatrix}
8 & 48 & 6 & 16
\\
1 & 6 & 2 & 8
\\
6 & 12 & 8 & 6
\\
3/2 & 6 & 6 & 8
\end{pmatrix},
\end{gather*}
and then evaluate the scalars
\begin{equation*}
\bm{h}^{-}
\bm{F}_{11}
\bm{g}
=
3,
\qquad
\bm{h}^{-}
\bm{F}_{12}
\bm{g}
=
8,
\qquad
\bm{h}^{-}
\bm{F}_{21}
\bm{g}
=
8.
\end{equation*}

After substitution of the obtained results followed by simplification, the function $H$ takes the form
\begin{equation*}
H(t)
=
24t^{-3}
\oplus
8t^{-2}
\oplus
24^{1/2}t^{-1}
\oplus
8^{1/2}t^{-1/2}
\oplus
24^{1/3}t^{-1/3}.
\end{equation*}

It remains to calculate $\lambda\oplus\gamma=2$, $\mu\oplus\delta=3$ and $H(\mu\oplus\delta)=2$, and then conclude that the equality $H(\mu\oplus\delta)=\lambda\oplus\gamma$ is valid. It follows from Theorem~\ref{T-minxAxxBx-gleqxleqh} that the Pareto front in this case is a single point
\begin{equation*}
\alpha
=
\lambda\oplus\gamma
=
2,
\qquad
\beta
=
\mu\oplus\delta
=
3.
\end{equation*}

With $\alpha=2$ and $\beta=3$, all Pareto-optimal solutions are given by
\begin{equation*}
\bm{x}
=
(2^{-1}\bm{A}\oplus3^{-1}\bm{B})^{\ast}
\bm{u},
\qquad
\bm{g}
\leq
\bm{u}
\leq
(\bm{h}^{-}
(2^{-1}\bm{A}
\oplus
3^{-1}\bm{B})^{\ast})^{-}.
\end{equation*}

To represent the solution in more detail, we calculate the matrix
\begin{equation*}
2^{-1}\bm{A}\oplus3^{-1}\bm{B}
=
\begin{pmatrix}
 1/2 & 3/2 & 2 & 1
\\
 1/6 & 1/2 & 1/4 & 1/6
\\
 1/8 & 1 & 1/2 & 2
\\
 1/4 & 3/2 & 1/8 & 1/2
\end{pmatrix}
\end{equation*}
together with its second and third powers
\begin{equation*}
\begin{pmatrix}
 1/4 & 2 & 1 & 4
\\
 1/12 & 1/4 & 1/3 & 1/2
\\
 1/2 & 3 & 1/4 & 1
\\
 1/4 & 3/4 & 1/2 & 1/4
\end{pmatrix},
\qquad
\begin{pmatrix}
 1 & 6 & 1/2 & 2
\\
 1/8 & 3/4 & 1/6 & 2/3
\\
 1/2 & 3/2 & 1 & 1/2
\\
 1/8 & 1/2 & 1/2 & 1
\end{pmatrix}.
\end{equation*}

We combine these matrices and the identity matrix to from the Kleene star matrix
\begin{equation*}
(2^{-1}\bm{A}\oplus3^{-1}\bm{B})^{\ast}
=
\begin{pmatrix}
 1 & 6 & 2 & 4
\\
 1/6 & 1 & 1/3 & 2/3
\\
 1/2 & 3 & 1 & 2
\\
 1/4 & 3/2 & 1/2 & 1
\end{pmatrix}.
\end{equation*}

The vector of parameters $\bm{u}$ must satisfies the condition
\begin{equation*}
\bm{g}
=
\begin{pmatrix}
1
\\
0
\\
0
\\
0
\end{pmatrix}
\leq
\bm{u}
\leq
(\bm{h}^{-}
(2^{-1}\bm{A}\oplus3^{-1}\bm{B})^{\ast})^{-}
=
\begin{pmatrix}
 1
\\
1/6
\\
1/2
\\
1/4
\end{pmatrix}.
\end{equation*}

It is easy to verify that both the lower and upper bounds on the vector $\bm{u}$ yield the same vector
\begin{equation*}
\bm{x}
=
\begin{pmatrix}
 1
\\
1/6
\\
1/2
\\
1/4
\end{pmatrix},
\end{equation*}
which therefore appears to be a single solution of the problem.

Note that this result corresponds to the limiting solution $\bm{x}_{1}$ of the problem in \cite{Krivulin2020Using}.
\end{example}

\section{Conclusions}
\label{S-C}

In this paper, we considered the following problem: given two matrices of pairwise comparisons of alternatives evaluated according to two criteria, find a vector of ratings (priorities, weights) of the alternatives subject to box constraints imposed on the ratings. Such a constrained bi-criteria pairwise comparison problem can occur in complex decision tasks, where the ratings are calculated to make decision about which alternative is more preferable to be chosen under constraints that can reflect prior information available to the decision maker about the absolute ratings of alternatives.

As the two most common approaches, the methods of analytical hierarchy process and weighted geometric mean are used to solve multicriteria pairwise comparison problems. Both methods offer a single solution vector; the first method applies a computational procedure to find the vector of ratings numerically, whereas the second method derives the vector analytically by solving a problem of matrix approximation in the log-Euclidean sense. These methods, which are quite computationally efficient for unconstrained problems, can hardly incorporate constraints on the ratings without sufficiently altering the solution mechanism and significantly complicating the calculation procedure.

To handle the constrained bi-criteria pairwise comparison problem, we have applied a solution technique that combines log-Chebyshev approximation with tropical optimization. First, we have represented this problem as a matrix approximation problem that is to find a common consistent matrix that simultaneously approximates in the log-Chebyshev sense both pairwise comparison matrices subject to the given constraints. The approximating matrix, which has the form of a symmetrically reciprocal matrix of unit rank, is determined by a positive vector that is taken as a solution vector of ratings.

Furthermore, we have formulated the constrained approximation problem in terms of tropical mathematics (which studies algebraic systems with idempotent operations) as a tropical bi-objective optimization problem. We have solved the problem by using methods and results of tropical optimization, which yields all Pareto-optimal solutions given in an explicit analytical form suitable for further analysis and numerical computations.

The results obtained show that the approach based on log-Chebyshev approximation combined with tropical optimization offers a good potential to provide analytical solutions to both unconstrained and constrained multicriteria pairwise comparison problems. As an efficient instrument for solving pairwise comparison problems, this approach can be used to supplement and complement existing methods of evaluating alternatives in decision making, in particular for handling constrained problems.

At the same time, the proposed solution deals with only two criteria for comparisons, which substantially provides that all Pareto-optimal solutions be obtained analytically in a compact vector form. As one can see, the extension of the solution to problems with more criteria will lead to a considerable increase in the complexity of calculations, which makes it necessary to modify the technique. Specifically, since finding all Pareto-optimal solutions in pairwise comparison problems with many criteria may be too difficult, it seems to be more reasonable to concentrate on deriving a few particular solutions.

We consider the further development and improvement of the solution technique to handle constrained pairwise comparison problems with three and more criteria as a promising direction of future research. The evaluation of computational complexity of the proposed solutions presents another research problem of interest.

\bibliographystyle{abbrvurl}

\bibliography{Algebraic_solution_to_box-constrained_bi-criteria_problem_of_rating_alternatives_through_pairwise_comparisons}

\end{document}